\theoremstyle{plain} \newtheorem{prop}{Proposition}[section]
\theoremstyle{plain} \newtheorem{theo}[prop]{Theorem}
\theoremstyle{plain} \newtheorem*{theon}{Theorem}
\theoremstyle{plain} \newtheorem{lem}[prop]{Lemma}
\theoremstyle{plain} 
\theoremstyle{plain} \newtheorem{cor}[prop]{Corollary}
\theoremstyle{definition} \newtheorem{defn}[prop]{Definition}
\theoremstyle{remark} \newtheorem{rem}[prop]{Remark}
\theoremstyle{remark} 
\numberwithin{equation}{section}
\newcommand{\be}{\mathbf{e}}
\newcommand{\bepsilon}{\pmb{\epsilon}}
\newcommand{\bj}{\mathbf{j}}
\newcommand{\blambda}{\pmb{\lambda}}
\newcommand{\bm}{\mathbf{m}}
\newcommand{\bSO}{\mathbf{SO}}
\newcommand{\bu}{\mathbf{u}}
\newcommand{\bv}{\mathbf{v}}
\newcommand{\bx}{\mathbf{x}}
\newcommand{\by}{\mathbf{y}}
\newcommand{\bz}{\mathbf{z}}
\newcommand{\bzero}{\mathbf{0}}
\newcommand{\cA}{\mathcal{A}}
\newcommand{\cC}{\mathcal{C}}
\newcommand{\cJ}{\mathcal{J}}
\newcommand{\cL}{\mathcal{L}}
\newcommand{\cO}{\mathcal{O}}
\newcommand{\cT}{\mathcal{T}}
\newcommand{\cU}{\mathcal{U}}
\newcommand{\cX}{\mathcal{X}}
\newcommand{\dA}{\mathbbm{A}}
\newcommand{\dC}{\mathbb{C}}
\newcommand{\df}{\mathbbm{f}}
\newcommand{\dg}{\mathbbm{g}}
\newcommand{\dG}{\mathbbm{G}}
\newcommand{\dmu}{\bbmu}
\newcommand{\domega}{\bbomega}
\newcommand{\dphi}{\bbphi}
\newcommand{\dQ}{\mathbb{Q}}
\newcommand{\dR}{\mathbb{R}}
\newcommand{\dZ}{\mathbb{Z}}
\newcommand{\fF}{\mathfrak{F}}
\newcommand{\fS}{\mathfrak{S}}
\newcommand{\ran}{\mathrm{an}}
\newcommand{\rcan}{\mathrm{can}}
\newcommand{\rd}{\mathrm{d}}
\newcommand{\rdim}{\mathrm{dim}}
\newcommand{\re}{\mathrm{e}}
\newcommand{\rfor}{\mathrm{for}}
\newcommand{\rH}{\mathrm{H}}
\newcommand{\rHess}{\mathrm{Hess}}
\newcommand{\rHom}{\mathrm{Hom}}
\newcommand{\rint}{\mathrm{int}}
\newcommand{\rpoly}{\mathrm{poly}}
\newcommand{\rrpoly}{\mathrm{rpoly}}
\newcommand{\rSpec}{\mathrm{Spec}}
\newcommand{\rSpf}{\mathrm{Spf}}
\newcommand{\rstar}{\mathrm{star}}
\newcommand{\rvol}{\mathrm{vol}}
\newcommand{\sC}{\mathscr{C}}
\newcommand{\sG}{\mathscr{G}}
\newcommand{\sL}{\mathscr{L}}
\newcommand{\sM}{\mathscr{M}}
\newcommand{\sX}{\mathscr{X}}
\begin{document}

\title{\textbf{A non-archimedean analogue of Calabi-Yau theorem for totally degenerate abelian varieties}}
\author{Yifeng Liu
\\ Columbia University}
\date{June 11, 2010}
\maketitle

\begin{abstract}
We show an example of a non-archimedean version of the Calabi-Yau
theorem in complex geometry. Precisely, we consider totally
degenerate abelian varieties and certain probability measures on
their associated analytic spaces in the sense of Berkovich.
\end{abstract}

{\small \tableofcontents}

\section{Introduction}
\label{sec_intro}

The theorem of Calabi-Yau is one of most important results in
complex geometry which has many applications (e.g. Yau's famous
paper \cite{Ya0}). In one version, it claims the following fact. Let
$M$ be a compact complex manifold with an ample line bundle $L$.
Then for any smooth positive measure $\mu$ on $M$ with
$\int_M\mu=\int_M c_1(L)^{\wedge\rdim M}$, there is a positive
metric $\|\;\|$ on $L$, unique up to a constant multiple, such that
$c_1(L,\|\;\|)^{\wedge\rdim M}=\mu$.

One would like to ask the similar question for a non-archimedean
field, for example, if we replace $\dC$ by $\dC_p$ and complex
manifolds by non-archimedean analytic spaces, or Berkovich spaces in
\cite{Be1}. Hence let $X$ be a smooth proper (strictly) analytic
space over $\dC_p$ with an ample line bundle $L$ (in particular, it
is algebrizable by GAGA). Given any integrable metric $\|\;\|$ (cf.
\cite{Zh2}) on $L$, although we don't have a nice analogue of
$(1,1)$-form for $c_1(L,\|\;\|)$ in the non-archimedean situation so
far, we can still talk about its top wedge, i.e.,
$c_1(L,\|\;\|)^{\wedge\rdim M}$ which is defined by Chambert-Loir in
\cite{Ch}. The top wedge is a measure on the underlying compact
(metrizable) topological space of $X$. If $\|\;\|$ is semi-positive
in the sense of \cite{Zh1, Zh2}, then $c_1(L,\|\;\|)^{\wedge\rdim
M}$ is a positive measure in the following sense: $\int_X
fc_1(L,\|\;\|)^{\wedge\rdim M}\geq0$ for any non-negative continuous
real function $f$ on $X$. Then analogous to the complex case, given
a positive measure $\mu$ on $X$ with $\int_X\mu=\deg_L(X)$, we can
ask the following two questions:
\begin{description}
  \item[(E)] Does it exist a semi-positive metric $\|\;\|$ on
  $L$ such that $c_1(L,\|\;\|)^{\wedge\rdim
  M}=\mu$?
  \item[(U)] If it does, is it unique up to a constant multiple?
\end{description}

The question (U) has been perfectly answered by Yuan and Zhang in
\cite{YZ}. There, they have already used this uniqueness result to
prove several exciting theorems in algebraic dynamic systems. The
answer to the question (E) is in fact {\em negative} in general
which is due to several reasons. We are still not clear about the
nature of this question. One possible reason is that the notion of
being positive in the metric side and measure side are not quite
compatible as in the complex case. Nevertheless, we would like to
give one example where the answer to (E) is positive, of course,
under certain restrictions.

The following is our situation. Let us consider a totally degenerate
abelian variety $A$, say over $\dC_p$, i.e., the associated analytic
space $A^{\ran}\cong\left(\dG_m^d\right)^{\ran}/M$ for a complete
lattice $M\in\dG_m^d(\dC_p)$ (cf. Section \ref{sec_mumford}). We
have an evaluation map
$\tau_A:A^{\ran}=\left(\dG_m^d\right)^{\ran}/M\rightarrow\dR^d/\Lambda$
with a complete lattice $\Lambda\subset\dR^d$, which is continuous
and surjective. This map has a continuous section
$i_A:\dR^d/\Lambda\hookrightarrow A^{\ran}$. In fact, $i_A$
identifies $\dR^d/\Lambda$ as a strong deformation retract of
$A^{\ran}$ for which $i_A\circ\tau_A=\Phi(\cdot,1)$ for a strong
retraction map $\Phi:A^{\ran}\times[0,1]\rightarrow A^{\ran}$. Then
we prove the following theorem which is a certain non-archimedean
analogue of Calabi-Yau theorem.

\begin{theon}[Theorem \ref{theo_ncy}]
Let $A$ be a $d$-dimensional totally degenerate abelian variety over
$k$ (e.g. $\dC_p$) and $L$ an ample line bundle on $A$. For any
measure $\mu=f\rd\bx$ of $\dR^d/\Lambda$ with $f$ a positive smooth
function and $\int_{\dR^d/\Lambda}\mu=\deg_L(A)$, there is a
semi-positive metric $\|\;\|$ on $L$, unique up to a constant
multiple, such that $c_1(\overline{L})^d=(i_A)_*\mu$, where $\rd\bx$
is the Lebesgue measure on $\dR^d/\Lambda$ and
$\overline{L}=(L,\|\;\|)$.
\end{theon}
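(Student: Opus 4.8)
\emph{Uniqueness} is exactly the theorem of Yuan--Zhang \cite{YZ}: any two semi-positive metrics on $L$ with the same measure $c_1(\overline{L})^d$ differ by a positive constant. So the content of the statement is \emph{existence}, and the plan is to transport the problem, through the Mumford uniformization $A^{\ran}\cong(\dG_m^d)^{\ran}/M$, to the classical real Monge--Amp\`ere equation on the torus $\dR^d/\Lambda$.

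First I would make precise the dictionary between metrics on $L$ and convex functions on $\dR^d$. Pulling $L$ back along the uniformization trivializes it, and the ample polarization is encoded by a symmetric positive-definite form $Q$ together with the lattice $\Lambda$ (the $\tau_A$-image of $M$). A continuous metric $\|\;\|$ on $L$ then corresponds to a continuous function on $(\dG_m^d)^{\ran}$ which is quasi-periodic for $M$ with cocycle governed by $Q$; restricting it to the canonical skeleton $\dR^d\subset(\dG_m^d)^{\ran}$ yields a continuous $\phi:\dR^d\to\dR$ with $\phi(\bx+\lambda)-\phi(\bx)$ affine in $\bx$ for $\lambda\in\Lambda$ and ``Hessian part'' $Q$. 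The two structural facts I would establish in the body are: (i) $\|\;\|$ is semi-positive in the sense of \cite{Zh1,Zh2} if and only if $\phi$ is convex; and (ii) in that case $c_1(\overline{L})^d=(i_A)_*\,\mathrm{MA}(\phi)$, where $\mathrm{MA}(\phi)=\det(\rHess\,\phi)\,\rd\bx$ is the (Alexandrov) real Monge--Amp\`ere measure on $\dR^d/\Lambda$. For this one treats first the \emph{model} metrics coming from toric formal models of $(\dG_m^d)^{\ran}$ propagated equivariantly by the Mumford construction: there $\phi$ is piecewise affine, semi-positivity of $\|\;\|$ translates into convexity of the associated polyhedral subdivision, Chambert-Loir's local formula \cite{Ch} identifies $c_1(\overline{L})^d$ with the discrete (mixed-volume) Monge--Amp\`ere measure of $\phi$, and one checks its total mass equals $\deg_L(A)$. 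The general case follows by uniform approximation of a convex $\phi$ by rational piecewise-affine convex functions, using continuity of the Chambert-Loir measure under uniform limits of semi-positive metrics and weak continuity of $\mathrm{MA}$ under locally uniform limits of convex functions. This identification $c_1(\overline{L})^d=(i_A)_*\mathrm{MA}(\phi)$ is where I expect the real work to lie.

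Granting the dictionary, the theorem reduces to: given smooth $f>0$ on $\dR^d/\Lambda$ with $\int_{\dR^d/\Lambda}f\,\rd\bx=\deg_L(A)$, find a convex $\phi:\dR^d\to\dR$, with $\phi(\bx+\lambda)-\phi(\bx)$ affine of Hessian part $Q$, solving $\det(\rHess\,\phi)=f$. Fix one such quasi-periodic convex $\phi_0$ (say a quadratic form realizing $Q$); by the mass computation in (ii), $\int_{\dR^d/\Lambda}\mathrm{MA}(\phi_0)=\deg_L(A)$, so $\nabla\phi_0$ descends to a linear isomorphism of tori $\dR^d/\Lambda\to\dR^d/\Lambda'$ with $\rvol(\dR^d/\Lambda')=\deg_L(A)$. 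Writing $\phi=\phi_0+\psi$ with $\psi$ $\Lambda$-periodic, $\nabla\phi$ descends to a map $\dR^d/\Lambda\to\dR^d/\Lambda'$, and $\mathrm{MA}(\phi)=f\,\rd\bx$ says exactly that $\nabla\phi$ pushes $f\,\rd\bx$ forward to Lebesgue measure on $\dR^d/\Lambda'$, the two measures having equal mass. The existence of such a (gradient of a) convex potential is then the solvability of the real Monge--Amp\`ere equation on the torus, which is classical --- via optimal transport for quadratic cost, or via Minkowski's existence theorem for convex bodies together with approximation --- and since $f$ is smooth and bounded away from $0$, Caffarelli's regularity theory upgrades $\phi$ to a smooth, strictly convex solution, unique up to an additive constant.

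Finally, feeding this $\phi$ back through the dictionary produces a semi-positive metric $\|\;\|$ on $L$: it is a uniform limit of semi-positive model metrics because $\phi$ is a uniform limit of rational piecewise-affine convex functions, hence semi-positive in the sense of \cite{Zh1,Zh2}; and $\overline{L}=(L,\|\;\|)$ satisfies $c_1(\overline{L})^d=(i_A)_*\mathrm{MA}(\phi)=(i_A)_*(f\,\rd\bx)=(i_A)_*\mu$. Combined with the Yuan--Zhang uniqueness (the additive constant in $\phi$ corresponding to the multiplicative constant in the metric), this proves Theorem \ref{theo_ncy}. The only genuinely non-formal ingredients are thus the structural identity (ii) --- reducing the Chambert-Loir measure on the non-archimedean uniformization to the toric mixed-volume measure via the Mumford construction --- and the Monge--Amp\`ere regularity input of Caffarelli; everything else is bookkeeping in the uniformization and an appeal to \cite{YZ}.
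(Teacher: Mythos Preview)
Your overall architecture matches the paper's: uniqueness by Yuan--Zhang, a dictionary identifying semi-positive toric metrics on $L$ with convex quasi-periodic functions $\phi$ on $\dR^d$ and the Chambert-Loir measure with the real Monge--Amp\`ere measure $(i_A)_*\det(\rHess\,\phi)\,\rd\bx$, and then solving the real Monge--Amp\`ere equation on $\dR^d/\Lambda$. The dictionary is indeed the substance of the paper (Proposition~\ref{prop_metric} plus the approximation of $\sC^2$ convex Green functions by strongly convex polytopal ones, and then Theorem~\ref{theo_measure} for the limit formula), and your sketch of it via toric model metrics and uniform approximation is the right outline, though the actual estimates (Lemmas~\ref{lem_sp2}--\ref{lem_sp5} and the Step~3 bound in Theorem~\ref{theo_measure}) are where most of the effort goes.

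The genuine difference is in how you solve the real Monge--Amp\`ere equation $\det\bigl(Q_{ij}+\psi_{ij}\bigr)=f$ on $\dR^d/\Lambda$. You propose optimal transport (Brenier/McCann on the torus) or Minkowski-type existence, followed by Caffarelli's interior regularity to get smoothness. The paper instead complexifies: it forms the abelian variety $\dA=\dC^d/(\Lambda\oplus i\Lambda)$ with the flat K\"ahler form coming from $Q$, lifts $f$ to a function $\df(\bx,\by)=f(\bx)$ independent of the imaginary direction, applies Yau's solution of the \emph{complex} Monge--Amp\`ere equation on $\dA$, and then observes by Calabi's uniqueness that the solution $\dphi$ must share the translation symmetry in $\by$, hence $\partial\dphi/\partial\by_j\equiv 0$ and $\phi(\bx):=\dphi(\bx,\bzero)$ solves the real equation. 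Your route is more self-contained on the real side and avoids the detour through K\"ahler geometry, at the price of importing Caffarelli's regularity theory; the paper's route is notable because the \emph{same} PDE theorem (Yau's) that proves the archimedean Calabi--Yau also finishes the non-archimedean one, which is part of the point the author wants to make. Both are valid; yours is arguably the more natural PDE approach, while the paper's emphasizes the structural parallel with the complex case.
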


The main ingredient of the proof is a limit formula for the measure
associated to certain integrable metrics, which is Theorem
\ref{theo_measure}. According to this formula, the existence part
accounts to consider the following question in differential
equation. We would like to mention it here since it is really
interesting that we end up with a (real) Monge-Amp\`{e}re equation
on a real torus, quite similar to the complex case, although we are
doing non-archimedean geometry. In fact, let $\dR^d/\Lambda$ be the
real torus as above with usual coordinate $x_1,...,x_d$ and Lebesgue
measure $\rd\bx=\rd x_1\cdots\rd x_d$. Let $(g_{ij})_{i,j=1,...,d}$
be a positive definite (real symmetric) matrix. Then for any smooth
real function $f$ on $\dR^d/\Lambda$ such that
$\int_{\dR^d/\Lambda}\re^f\rd\bx=1$, we will show that there exists
a unique smooth (real) function $\phi$ such that:
\begin{itemize}
  \item The matrix $\left(g_{ij}+\frac{\partial^2\phi}{\partial x_i\partial
  x_j}\right)$ is positive definite;
  \item $\int_{\dR^d/\Lambda}\phi\rd\bx=0$;
  \item It satisfies the following real Monge-Amp\`{e}re equation
   \begin{equation*}
   \det\left(g_{ij}+\frac{\partial^2\phi}{\partial x_i\partial
   x_j}\right)=\det\left(g_{ij}\right)\cdot\re^f.
   \end{equation*}
\end{itemize}
We will reduce it to the complex Monge-Amp\`{e}re equation through
an easy process. Then Yau's famous attack on this equation will
imply our theorem. Hence the same PDE problem solves this
non-archimedean Calabi-Yau theorem as well!\\

At last, we would like make a remark about the notations. We use
$|\;|$ for the non-archimedean norm; $\|\;\|$ for the metrics on
line bundles. But due to the conventions, we will also use $|\;|$
for the usual absolute value of real numbers and the total
measure; $\|\;\|$ for the Euclidean norm of $\dR^d$ when $d>1$.\\

{\small

{\em Acknowledgements.} The paper is motivated by the work of Xinyi
Yuan and Shou-Wu Zhang \cite{YZ} on the uniqueness part of the
Calabi-Yau theorem and Gubler \cite{Gu2} on the tropical geometry of
totally degenerate abelian varieties. The author would also like to
thank Xander Faber, Xinyi Yuan and Shou-Wu Zhang for useful
discussion.

}

\section{Mumford's construction}
\label{sec_mumford}

In this section, we briefly recall Mumford's construction of
(formal) models of totally degenerate abelian varieties in
\cite[\S6]{Mu2},
also see \cite[\S4, \S6]{Gu2}.\\

{\em Valuation map.} Let $k$ be the completion of the algebraic
closure of a $p$-adic local field, for example, $k=\dC_p$. Let
$|\;|$ be the norm on $k$ and its extended valuation fields,
$k^{\circ}$ the sub-ring of $k$ consisting of elements $x\in k$ with
$|x|\leq1$, $k^{\circ\circ}$ the maximal ideal of $k^{\circ}$
consisting of elements $x\in k$ with $|x|<1$ and
$\widetilde{k}=k^{\circ}/k^{\circ\circ}$ the residue field which is
algebraically closed. We fix a logarithm $\log$ such that
$\log|x|\in\dQ$ for all $x\in k$.

Fix a split torus $T=\dG_{m,k}^d$ of rank $d\geq1$ over $k$. We have
the following valuation map
 \begin{equation*}
 \tau:T^{\ran}=\left(\dG_{m,k}^d\right)^{\ran}\longrightarrow\dR^d,\qquad
 t\mapsto(-\log|T_1(t)|,...,-\log|T_d(t)|)
 \end{equation*}
where $\left(\dG_{m,k}^d\right)^{\ran}$ is the associated
$k$-analytic space (cf. \cite[\S3.4]{Be1}) and $T_i$ are coordinate
functions. It is surjective and continuous with respect to the
underlying topology of the analytic space and the usual topology of
$\dR^d$. And $\tau$ has a continuous section
 \begin{equation*}
 i:\dR^d\longrightarrow T^{\ran},\qquad
 \bx\mapsto\xi_{\bx}
 \end{equation*}
where $\xi_{\bx}$ is the Shilov boundary of the affinoid domain
$\tau^{-1}(\bx)$ (cf. \cite[Corollary 4.5]{Gu2}).

Now consider a totally degenerate abelian variety $A$ over $k$,
i.e., $A^{\ran}\cong T^{\ran}/M$ for a complete lattice $M\subset
T(k)$. By a complete lattice $M$, we mean that $M$ bijectively maps
to a rational complete lattice $\Lambda\subset\dR^d$ under $\tau$.
Here, we say a complete lattice is rational if it has a basis whose
coordinates are in $\dQ$. Hence we have the induced map
$\tau_A:A^{\ran}\rightarrow\dR^d/\Lambda$ and
$i_A:\dR^d/\Lambda\hookrightarrow A^{\ran}$. In fact, $i_A$
identifies $\dR^d/\Lambda$ with a strong deformation retract or a
skeleton of $A^{\ran}$ as in
\cite[\S6.5]{Be1}.\\

{\em Rational polytopes.} A compact subset $\Delta$ of $\dR^d$ is
called a {\em polytope} if it is an intersection of finitely many
half-spaces $\{\bx\in\dR^d\;|\;\bm_i\cdot\bx\geq c_i\}$. We say
$\Delta$ is {\em rational} if we can choose all $\bm_i\in\dZ^d$ and
$c_i\in\dQ$. The dimension $\rdim(\Delta)$ of $\Delta$ is its usual
topological dimension and we denote by $\rint(\Delta)$ the
topological interior of $\Delta$ in $\dR^d$. A {\em closed face} of
$\Delta$ is either $\Delta$ itself or $B\cap\Delta$ where $B$ is the
boundary of a half-space containing $\Delta$. It is obvious that a
closed face of a (rational) polytope is again a (rational) polytope.
An {\em open face} is a closed face without its properly contained
closed faces.

A {\em (rational) polytopal complex} $\cC$ in $\dR^d$ is a locally
finite set of (rational) polytopes such that (1) if $\Delta\in\cC$,
then all its closed faces are in $\cC$ and (2) if $\Delta,
\Delta'\in\cC$, then $\Delta\cap\Delta'$ is either empty or a closed
face of both $\Delta$ and $\Delta'$. The polytopes of dimension $0$
are called {\em vertices}. We say $\cC$ is a {\em (rational)
polytopal decomposition} of $S\subset\dR^d$ if $S$ is the union of
all polytopes in $\cC$. In particular, if $S=\dR^d$, we say $\cC$ is
a (rational) polytopal decomposition of $\dR^d$.

For a (rational) complete lattice $\Lambda\subset\dR^d$, we say
$\cC$ is {\em $\Lambda$-periodic} if $\Delta\in\cC$ implies
$\Delta+\blambda\in\cC$ for all $\blambda\in\Lambda$. A (rational)
polytopal decomposition $\cC_{\Lambda}$ of $\dR^d/\Lambda$ for a
(rational) complete lattice $\Lambda$ is a $\Lambda$-periodic
(rational) polytopal decomposition $\cC$ of $\dR^d$ such that
$\Delta$ maps bijectively to its image under the projection
$\dR^d\rightarrow\dR^d/\Lambda$ for all $\Delta\in\cC$. A polytope,
a closed face or an open face of $\cC_{\Lambda}$ is a
$\Lambda$-translation equivalence class of the corresponding object
of $\cC$.

A continuous real function $f$ on $\dR^d$ is called {\em (rational)
polytopal} if there is a (rational) polytopal decomposition $\cC$ of
$\dR^d$ such that $f$ restricted to all $\Delta\in\cC$ is affine
(and takes rational values on all vertices). We denote by
$\sC_{\rpoly}(\dR^d)$ ($\sC_{\rrpoly}(\dR^d)$) the space of
(rational) polytopal continuous functions on $\dR^d$. We have the
following simple lemma.

\begin{lem}\label{lem_poly}
Let $\Lambda$ be a (rational) complete lattice of $\dR^d$ and $f$ in
$\sC_{\rpoly}(\dR^d)$ ($\sC_{\rrpoly}(\dR^d)$) satisfying\\
(a) There exist affine functions $z_{\blambda}$ for all
$\blambda\in\Lambda$ satisfying
$f(\bx+\blambda)=f(\bx)+z_{\blambda}(\bx)$ for all $\blambda$ and
$\bx\in\dR^d$;\\
(b) If $\Delta$ is a maximal connected subset on which $f$ is
affine, then
$\Delta$ is a bounded convex subset of $\dR^d$.\\
Then $\Delta$ is a (rational) polytope and if $\cC$ is the polytopal
complex generated by all such $\Delta$ and their closed faces, then
$\cC$ is a $\Lambda$-periodic (rational) polytopal decomposition of
$\dR^d$.
\end{lem}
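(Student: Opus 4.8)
The plan is to show first that each maximal affine piece $\Delta$ is a rational polytope, and then that the collection of all such $\Delta$ together with their faces forms a well-behaved periodic decomposition. For the first part, fix a maximal connected set $\Delta$ on which $f$ is affine, say $f|_\Delta = \ell$ for an affine function $\ell$. By definition of $\sC_{\rpoly}(\dR^d)$ there is a polytopal decomposition $\cC_0$ adapted to $f$; since $f$ is affine on every cell of $\cC_0$ and two cells sharing a face on which the two affine extensions agree can be merged, $\Delta$ is a union of finitely many cells of $\cC_0$ lying in the region where $f=\ell$, hence $\Delta$ is a finite union of polytopes. Using hypothesis (b) that $\Delta$ is convex (and bounded), a finite union of polytopes that is convex is itself a polytope; rationality follows because the defining data of $\cC_0$ and the affine function $\ell$ are rational in the rational case. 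The boundedness from (b) is what rules out unbounded cells and guarantees $\Delta$ is genuinely a (compact) polytope rather than a polyhedral region.

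Next I would check that the $\Delta$'s and their closed faces satisfy the two axioms of a polytopal complex. Local finiteness: around any point $\bx_0$, only finitely many cells of the adapted decomposition $\cC_0$ meet a neighborhood of $\bx_0$, so only finitely many maximal affine pieces do. The face-axiom (2) is the delicate point: if $\Delta,\Delta'$ are two maximal affine pieces, I must show $\Delta\cap\Delta'$ is a common closed face. Here the key observation is that $\Delta$ and $\Delta'$ are precisely the closures of distinct connected components of the locus where $f$ agrees with a fixed local affine function, so on $\Delta\cap\Delta'$ the two affine functions $\ell,\ell'$ coincide; the set $\{\ell=\ell'\}$ is an affine hyperplane (or all of $\dR^d$, impossible since $\Delta\ne\Delta'$), and $\Delta\cap\Delta'=\Delta\cap\{\ell=\ell'\}$ which is a face of $\Delta$ by definition, and symmetrically a face of $\Delta'$. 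Axiom (1) is automatic once we adjoin all closed faces, and one checks that closed faces of the $\Delta$'s are consistent with intersections using the same hyperplane argument applied to faces.

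Finally, for $\Lambda$-periodicity I would use hypothesis (a): for $\blambda\in\Lambda$, the function $\bx\mapsto f(\bx+\blambda)$ differs from $f(\bx)$ by the affine function $z_\blambda$, so $f$ is affine on $\Delta$ if and only if it is affine on $\Delta+\blambda$, and the maximal affine pieces are permuted by translation by $\Lambda$; hence $\cC$ is $\Lambda$-periodic. That $\cC$ covers all of $\dR^d$ is clear since every point lies in some cell of $\cC_0$, hence in some maximal affine piece. I expect the main obstacle to be the face-compatibility axiom (2): one has to argue carefully that two maximal affine pieces cannot overlap in a lower-dimensional set that fails to be a face of one of them, and this is exactly where convexity from (b) and the "maximal connected" hypothesis are both used — convexity forces each $\Delta$ to be cut out by the finitely many hyperplanes $\{\ell=\ell_j\}$ coming from neighboring pieces, so all intersections are along these hyperplanes and are therefore faces. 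The rational case requires no extra work beyond observing that every hyperplane and vertex produced above has rational data.
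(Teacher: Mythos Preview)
Your approach matches the paper's, which is extremely terse: it observes that $\Delta$ is closed (hence compact by (b)), that $f$ being polytopal gives a finite polytopal decomposition of $\Delta$, and that convexity then forces $\Delta$ to be a (rational) polytope; it then simply declares that the resulting $\cC$ is ``obviously a decomposition of $\dR^d$'' and reads off $\Lambda$-periodicity from (a). You run the same outline with considerably more detail, and you correctly single out the face-compatibility axiom as the nontrivial point---something the paper does not address at all.

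That said, your specific argument for axiom~(2) has a gap. The asserted equality $\Delta\cap\Delta'=\Delta\cap\{\ell=\ell'\}$ need not hold: a point of $\Delta$ lying on the hyperplane $\{\ell=\ell'\}$ has no a~priori reason to belong to $\Delta'$. More seriously, even granting this, $\Delta\cap\{\ell=\ell'\}$ is a face of $\Delta$ only if $\{\ell=\ell'\}$ is a \emph{supporting} hyperplane of $\Delta$, and the lemma does not assume $f$ convex, so this is not automatic. A safer route (closer in spirit to what the paper presumably has in mind by ``obviously'') is to note that each maximal $\Delta$ is a union of closed cells of the adapted complex $\cC_0$, so the collection of maximal $\Delta$'s is a coarsening of $\cC_0$ into convex unions of cells; one then checks that such a coarsening of a polytopal complex is again a polytopal complex. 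Your concluding remark that ``convexity forces each $\Delta$ to be cut out by the hyperplanes coming from neighboring pieces'' is the right intuition, but it implicitly uses that those hyperplanes support $\Delta$, which is exactly what needs to be shown.
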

\begin{proof}
Since $\Delta$ is closed, hence compact by (b). Since $f$ is
(rational) polytopal, there is a finite (rational) polytopal
decomposition of $\Delta$, hence $\Delta$ itself is a (rational)
polytope since it is convex. Let $\cC$ be the (rational) polytopal
complex generated by all such $\Delta$ and their closed faces which
is obviously a decomposition of $\dR^d$. The $\Lambda$-periodicity
is impled by (a).\\
\end{proof}

{\em Formal models.} Let $X$ be a projective scheme over $k$, a {\em
$k^{\circ}$-model} $\sX$ of $X$ is a scheme projective and flat over
$\rSpec\:k^{\circ}$ whose generic fibre $\sX_{\eta}\cong X$. A {\em
formal $k^{\circ}$-model} $\cX$ of $X$ is an admissible formal
scheme over $\rSpf\:k^{\circ}$ whose generic fibre $\cX_{\eta}\cong
X^{\ran}$. We denote by $\widetilde{\sX}$ (resp. $\widetilde{\cX}$)
the special fibre of $\sX$ (resp. $\cX$) which is a proper scheme
over $\rSpec\:\widetilde{k}$. The following result is due to Mumford
in the case $A$ is the base change of an abelian variety over a
$p$-adic local field (cf. \cite[Corollary 6.6]{Mu2}) and generalized
by Gubler in general case (cf. \cite[Proposition 6.3]{Gu2}).

\begin{prop}\label{prop_mumford}
Given a rational polytopal decomposition $\cC_{\Lambda}$ of
$\dR^d/\Lambda$, we may associate a formal $k^{\circ}$-model $\cA$
of $A$ whose special fibre $\widetilde{\cA}$ is reduced and the
irreducible components $Y$ of $\widetilde{\cA}$ are toric varieties
and one-to-one correspond to the vertices $\bv$ of $\cC_{\Lambda}$
by $\bv=\tau_A(\xi_Y)$, where $\xi_Y\in A^{\ran}$ is the point
corresponding to $Y$. The formal scheme $\cA$ has a covering by
formal open affine sets $\cU_{\Delta}$ for $\Delta\in\cC_{\Gamma}$.
Moreover, if $A$ is the base change of an abelian variety over a
$p$-adic local field, then $\cA$ can be constructed as a
$k^{\circ}$-model.
\end{prop}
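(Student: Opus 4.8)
The plan is to build $\cA$ by gluing explicit toric charts over $k^{\circ}$ and then descending through a quotient by $M$. First I lift $\cC_{\Lambda}$ to its $\Lambda$-periodic representative $\cC$, a polytopal decomposition of $\dR^{d}$. To a polytope $\Delta\in\cC$ I attach the $k^{\circ}$-subalgebra $\cA_{\Delta}\subset\cO(T)=k[T_{1}^{\pm1},\dots,T_{d}^{\pm1}]$ generated by the monomials $uT^{\bm}$ ($\bm\in\dZ^{d}$, $u\in k$) whose sup-norm is $\leq1$ on the affinoid domain $\tau^{-1}(\Delta)$; in the conventions of Section \ref{sec_mumford} this amounts to $-\log|u|\geq\max_{\bx\in\Delta}\bm\cdot\bx$, so the admissible $(\bm,-\log|u|)$ are the lattice points of a rational polyhedral cone in $\dR^{d+1}$ and, by a Gordan-type lemma (using that $\Delta$ is rational), $\cA_{\Delta}$ is topologically of finite type; thus $\cU_{\Delta}:=\rSpf\widehat{\cA_{\Delta}}$ is a formal affine scheme with generic fibre the affinoid $\tau^{-1}(\Delta)$. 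If $\Delta'$ is a closed face of $\Delta$ then $\cA_{\Delta}\subset\cA_{\Delta'}$ and one checks (as in the one-dimensional node example, where $\cU_{[a,b]}$ is $\rSpf$ of $k^{\circ}[S_{+},S_{-}]/(S_{+}S_{-}-t)$ with $t\in k^{\circ\circ}$, and the vertex charts are the tori $\dG_{m,k^{\circ}}$) that this realises $\cU_{\Delta'}\hookrightarrow\cU_{\Delta}$ as a formal open immersion, compatibly on triple overlaps; gluing along the face maps yields a formal scheme $\cT$ over $\rSpf k^{\circ}$ carrying an action of $M$, with generic fibre $\cT_{\eta}=\tau^{-1}(\dR^{d})=T^{\ran}$.

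Next I analyse the special fibre. Choosing the $k$-coefficients above to be actual elements of prescribed valuation — possible because $\cC$ is rational — the defining binomials of $\cA_{\Delta}$ reduce modulo $k^{\circ\circ}$ to monomials, so $\widetilde{\cU_{\Delta}}$, hence $\widetilde{\cT}$, is reduced, and its irreducible components are the closures of the minimal torus strata of the charts. These strata are indexed by the $0$-dimensional polytopes of $\cC$: to a vertex $\bv$ one attaches the chart $\cU_{\{\bv\}}=\dG_{m,k^{\circ}}^{d}$, and gluing the charts of all polytopes having $\bv$ as a vertex produces a complete toric variety $Y_{\bv}$ whose fan is the star of $\bv$ in $\cC$ (complete because $\cC$ covers all of $\dR^{d}$). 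By the explicit description of the Shilov points, the point $\xi_{Y_{\bv}}$ of $T^{\ran}$ reducing to the generic point of $Y_{\bv}$ is exactly $i(\bv)=\xi_{\bv}$, so $\tau(\xi_{Y_{\bv}})=\bv$.

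Then I pass to the quotient. Since $\cC_{\Lambda}$ is a genuine polytopal decomposition of $\dR^{d}/\Lambda$ — each $\Delta$ injects into the quotient — the lattice $M$ acts on $\cT$ freely, a finite subfamily of the $\cU_{\Delta}$ represents the quotient, and $\cA:=\cT/M$ exists as an admissible formal $k^{\circ}$-scheme covered by the formal open affines $\cU_{\Delta}$, $\Delta\in\cC_{\Lambda}$, with $\cA_{\eta}=T^{\ran}/M=A^{\ran}$. Flatness is automatic ($k^{\circ}$ is a valuation ring and $\cA$ has no $k^{\circ\circ}$-torsion), and properness of $\cA\to\rSpf k^{\circ}$ is checked on the special fibre: $\widetilde{\cA}$ is the finite union of the complete toric varieties $Y_{\bv}$, $\bv$ a vertex of $\cC_{\Lambda}$, glued along toric strata, hence proper; the reducedness and the stated bijection between the components of $\widetilde{\cA}$ and the vertices of $\cC_{\Lambda}$ are inherited from the computation of the preceding paragraph taken modulo $M$.

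Finally, for the refinement when $A=A_{0}\times_{K}k$ is defined over a $p$-adic local field $K$: all the combinatorial data are rational, so one may run the entire construction over the complete discrete valuation ring $K^{\circ}$, and there Mumford's relatively complete model (or, equivalently, formal GAGA applied to the proper formal $K^{\circ}$-scheme together with an ample line bundle coming from $L$ on $A$, which is available in the noetherian setting) upgrades $\cA$ to a projective flat $K^{\circ}$-model $\sA$ with $\widehat{\sA}=\cA$; this is precisely the content of \cite[\S6]{Mu2}, generalized to arbitrary rank-one valuations via formal schemes in \cite[\S6]{Gu2}. I expect the main obstacle to be the local admissibility-and-reducedness analysis of $\cU_{\Delta}$ together with the verification that the face maps glue consistently to a formal scheme — once this toric bookkeeping is in place, the free $M$-quotient, properness, and the component count are formal consequences, and the algebraic refinement only requires producing the relatively ample line bundle on the model, for which one follows the cited constructions.
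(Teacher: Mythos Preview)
The paper does not actually prove this proposition: it is stated as a citation, attributed to Mumford \cite[Corollary 6.6]{Mu2} in the discretely valued case and to Gubler \cite[Proposition 6.3]{Gu2} in general, with no argument given. Your sketch is a faithful outline of precisely that construction --- the toric $k^{\circ}$-algebras $\cA_{\Delta}$ attached to rational polytopes, their admissibility via Gordan, the gluing along face inclusions, the reduced special fibre with toric components indexed by vertices, and the free $M$-quotient --- so there is nothing to compare beyond noting that you have unpacked what the paper merely cites. The one place to be slightly more careful is the claim that the face map $\cU_{\Delta'}\hookrightarrow\cU_{\Delta}$ is a formal \emph{open} immersion: this is true but not automatic from the ring inclusion alone, and in Gubler's treatment it is the content of \cite[Proposition 4.4]{Gu2}, so you should cite that rather than leave it as a parenthetical ``one checks''.
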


\section{Toric metrized line bundles}
\label{sec_metrized}

In this section, we briefly recall the theory of metrized line
bundles and their associated measure for general varieties. We
introduce line bundles and toric metrized line bundles on $A$. Then
we prove the main result identifying certain toric integrable
metrics.\\

{\em Metrized line bundles and measure.} The general theory of
metrized line bundles is developed in \cite{Zh1}, \cite{Zh2}, also
see \cite{Ch} and \cite{Gu1}. Let $X$ be a projective scheme over
$k$ and $L$ a line bundle over $X$. A {\em metric} $\|\;\|$ on $L$
is given that, for all open subset $U$ of $X^{\ran}$ and a section
$s\in\Gamma(U,L^{\ran})$, a continuous function
$\|s\|:U\rightarrow\dR_{\geq0}$ such that $\| fs\|=|f|\cdot\|s\|$
for all $f\in\Gamma(U,\cO_U)$ which is zero only if $s=0$.

We say a metric is {\em algebraic} if it is defined by a model
$(\sX,\sL)$ where $\sX$ is a $k^{\circ}$-model of $X$ and $\sL$ is a
line bundle on $\sX$ such that $\sL_{\eta}\cong L^e$ for some
integer $e\geq1$. A metric is {\em formal} if we replace the
$k^{\circ}$-model $\sX$ by a formal $k^{\circ}$-model $\cX$ and
$\sL$ by a formal line bundle $\cL$ on $\cX$ such that
$\cL_{\eta}\cong (L^e)^{\ran}$. In fact, all formal metrics are
algebraic. An algebraic (resp. formal) metric is called {\em
semi-positive} if the reduction $\widetilde{\sL}$ (resp.
$\widetilde{\cL}$) has non-negative degree on all curves inside
$\widetilde{\sX}$ (resp. $\widetilde{\cX}$). In general, a metric on
$L$ is called {\em semi-positive} if it is the uniform limit of
algebraic semi-positive metrics. A metrized line bundle is called
{\em integrable} if it is isomorphic to a quotient of two
semi-positive metrized line bundles.

Next we recall the construction of measure by Chambert-Loir in
\cite[\S2]{Ch}. For simplicity, we only recall the algebraic case
(which we only need for calculation later) and for general metric,
one need to pass to the limit which we refer to {\em loc. cit.} for
details. Let $X$ be as above of dimension $d\geq1$ and $L_i$
($i=1,...,d$) line bundles on it. We endow $L_i$ with an algebraic
measure $\|\;\|_i$ induced by $(\sX,\sL_i)$ with
$(\sL_i)_{\eta}\cong L_i^{e_i}$ on a common model $\sX$ which is
assumed to be normal. Let $Y_j$ be the reduced irreducible
components of $\widetilde{\sX}$ and $\xi_j$ the unique point in the
inverse image of the generic point of $Y_j$ under the reduction map
$\pi:X^{\ran}\rightarrow\widetilde{\sX}$. Then we define
 \begin{equation}\label{measure}
 c_1(\overline{L_1})\wedge\cdots\wedge c_1(\overline{L_d})=\frac{1}{e_1\cdots
 e_d}\sum_j m_j\left(c_1(\widetilde{\sL_1})\cdots c_1(\widetilde{\sL_d})|Y_j\right)\delta_{\xi_j}
 \end{equation}
where $\overline{L_i}=(L_i,\|\;\|_i)$, $m_j$ is the multiplicity of
$Y_j$ in $\widetilde{\sX}$ and $\delta_{\xi_j}$ is the normalized
Dirac measure supported at $\xi_j$. In general, the measure
$c_1(\overline{L_1})\wedge\cdots\wedge c_1(\overline{L_d})$ is
symmetric and $\dZ$-multi-linear and we have
 \begin{equation}\label{degree}
 \int_{X^{\ran}}c_1(\overline{L_1})\wedge\cdots\wedge
 c_1(\overline{L_d})=c_1(L_1)\cdots c_1(L_d)|X.
 \end{equation}\\

{\em Line bundles on $A$.} The theory of line bundles on totally
degenerate abelian varieties is very similar to that over complex
field. We refer to \cite[\S2]{BL} and \cite[Chapter 6]{FP} for more
details.

Let $A$ be a totally degenerate abelian variety as above and
$\check{M}=\rHom_k(T,\dG_{m,k})$ the character group of $T$. Let
$\check{T}$ be the split torus with character group $M$; i.e.,
$\check{T}=\rHom_k(M,\dG_{m,k})$. Then $\check{A}^{\ran}$ is
canonically isomorphic to $\check{T}^{\ran}/\check{M}$ where
$\check{A}$ is the dual abelian variety of $A$. Let $L$ be a line
bundle on $A$, the pull-back of $L$ to $T$ is trivial and is
identified with $T\times\dG_{a,k}$. Hence $L$ is identified with a
quotient $(T\times\dG_{a,k})/M$ whose action is given by an element
$\mu\mapsto Z_{\mu}$ of $\rH^1(M,\cO(T)^{\times})$. The function
$Z_{\mu}$ has the form $Z_{\mu}=d_{\mu}\sigma_{\mu}$ where
$d_{\mu}\in k^{\times}$, $\mu\mapsto\sigma_{\mu}$ is a group
homomorphism $\sigma:M\rightarrow\check{M}$ and
$d_{\mu\nu}d_{\mu}^{-1}d_{\nu}^{-1}=\sigma_{\nu}(\mu)$. By the
isomorphism $\tau:M\rightarrow\Lambda$, we get a unique symmetric
bilinear form $b$ on $\dR^d$ such that
$b(\tau(\mu),\tau(\nu))=-\log|\sigma_{\nu}(\mu)|$. Then $b$ is
positive definite if and only if $L$ is ample. And since
$\sigma_{\mu}$ is a character, $Z_{\mu}$ factors through $\tau$,
hence uniquely determines a function $z_{\blambda}$ on $\dR^d$ such
that $z_{\blambda}(\tau(t))=-\log|Z_{\mu}(t)|$ for all $\mu\in M$
and $t\in T$, where $\blambda=\tau(\mu)$. The function
$z_{\blambda}$ is affine with
 \begin{equation}
 z_{\blambda}(\bx)=z_{\blambda}(\bzero)+b(\bx,\blambda),\qquad\blambda\in\Lambda,\;\bx\in\dR^d.\\
 \end{equation}

Before stating the next lemma, we introduce some notations. We fix a
$\dZ$-basis $(\blambda_1,...,\blambda_d)$ of $\Lambda$ once for all
and let $\fF=\{\bx=x_1\blambda_1+\cdots+x_d\blambda_d\;|\;0\leq
x_i<1\}$ be a fundamental domain of $\Lambda$. The volume of the
closure $\overline{\fF}$ under the usual Lebesgue measure $\rd\bx$
of $\dR^d$ only depends on $\Lambda$ and will be denoted by
$\rvol(\Lambda)$. We define
$R_{\fF}=\max_{\bx,\bx'\in\overline{\fF}}\|\bx-\bx'\|$ and $r_{\fF}$
to be the maximal radius of balls contained in $\overline{\fF}$. We
denote by $S^{d-1}\subset\dR^d$ the standard unit ball, $\bSO_d$ the
special orthogonal group of $\dR^d$ and $\fS_d$ the group of
$d$-permutations.

Let $(\be_1,...,\be_d)$ be the standard base of the Euclidean space
$\dR^d$, $\sC^l(\dR^d)$ ($l\geq0$ and $\sC=\sC^0$) the space of real
functions whose $l$-th partial derivatives exist and are continuous,
$\sC^{\infty}(\dR^d)$ the space of real smooth functions and
$\sC^{k,\alpha}(\dR^d)$ ($k\geq0$, $\alpha\in[0,1)$) the spaces of
real functions whose $k$-th partial derivatives exist and are
H\"{o}lder continuous with exponential $\alpha$. We denote by
$\sC^l_{\geq0}(\dR^d)$ the subspace of functions non-negative
everywhere and $\sC^l_{>0}(\dR^d)$ that of functions positive
everywhere; similarly, we have $\sC^{\infty}_{\geq0}$,
$\sC^{\infty}_{>0}$, $\sC^{k,\alpha}_{\geq0}$ and
$\sC^{k,\alpha}_{>0}$. For a certain function $f$, we let
$f_i=\nabla_{\be_i}f$, $f_{ij}=\nabla_{\be_i}\nabla_{\be_j}f$ and
$f_{ijk}=\nabla_{\be_i}\nabla_{\be_j}\nabla_{\be_k}f$ if the
corresponding directional derivatives exist.

Let $q(\bx)=\frac{1}{2}b(\bx,\bx)$ be the associated quadratic form
and $H_q=d!\det\left(q_{ij}\right)_{i,j=1,...,d}$ which is a
constant. We have the following lemma.

\begin{lem}\label{lem_deg}
If $L$ is ample, then $\deg_L(A)=\rvol(\Lambda)H_q$.
\end{lem}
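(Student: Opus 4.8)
The plan is to express $\deg_L(A)$ as the total mass of a Chambert--Loir measure attached to a conveniently chosen metric on $L$, and then to evaluate that mass by toric geometry on a Mumford model. The point is that by \eqref{degree}, for \emph{any} integrable metric $\|\;\|$ on $L$ one has $\int_{A^{\ran}}c_1(\overline L)^{d}=c_1(L)^{d}|A=\deg_L(A)$, so it suffices to produce one convenient $\overline L$ and compute the integral of its associated measure.

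To choose the metric, first note that the cocycle relation for the affine functions $z_{\blambda}$ forces $\blambda\mapsto z_{\blambda}(\bzero)-q(\blambda)$ to be additive on $\Lambda$, so there is a linear function $\ell$ on $\dR^{d}$ with $z_{\blambda}(\bzero)=q(\blambda)+\ell(\blambda)$; then $g:=q+\ell$ is strictly convex and satisfies $g(\bx+\blambda)=g(\bx)+z_{\blambda}(\bx)$. Since $L$ is ample, for $e$ sufficiently divisible I would pick a $\Lambda$-periodic rational polytopal decomposition $\cC$ of $\dR^{d}$ with bounded maximal cells together with a $\cC$-piecewise affine convex function $h$ with $h(\bx+\blambda)=h(\bx)+e\,z_{\blambda}(\bx)$ (a polytopal convexification of $eg$ with integral slopes). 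By Lemma \ref{lem_poly} this $\cC$ descends to a polytopal decomposition $\cC_{\Lambda}$ of $\dR^{d}/\Lambda$, and by Proposition \ref{prop_mumford} the pair $(\cC,h)$ yields a formal $k^{\circ}$-model $(\cA,\cL)$ of $(A,L^{\otimes e})$ with $\widetilde\cA$ reduced and $\widetilde\cL$ relatively nef.

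Next I would run the computation. Applying \eqref{measure} with all multiplicities $m_{j}=1$ gives $c_{1}(\overline{L^{\otimes e}})^{d}=\sum_{\bv}\bigl(c_{1}(\widetilde\cL)^{d}\,|\,Y_{\bv}\bigr)\,\delta_{\xi_{Y_{\bv}}}$, the sum over the vertices $\bv$ of $\cC_{\Lambda}$, where $Y_{\bv}$ is the $d$-dimensional toric component attached to $\bv$. On $Y_{\bv}$ the bundle $\widetilde\cL|_{Y_{\bv}}$ corresponds to the lattice polytope $\Delta_{\bv}:=\{\bm\in\dR^{d}\;|\;h(\bx)\ge h(\bv)+\bm\cdot(\bx-\bv)\ \text{for all }\bx\}$, so the standard toric intersection formula gives $c_{1}(\widetilde\cL)^{d}|Y_{\bv}=d!\,\rvol(\Delta_{\bv})$, $\rvol$ being Lebesgue volume. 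Hence by \eqref{degree},
\[
e^{d}\deg_L(A)=\int_{A^{\ran}}c_{1}\bigl(\overline{L^{\otimes e}}\bigr)^{d}=d!\sum_{\bv\in\cC_{\Lambda}}\rvol(\Delta_{\bv}).
\]
From $h(\bx+\blambda)=h(\bx)+e\,z_{\blambda}(\bzero)+e\,b(\bx,\blambda)$ one checks $\Delta_{\bv+\blambda}=\Delta_{\bv}+e\,B\blambda$ with $B=(b_{ij})$; combined with the facts that distinct vertices give polytopes with disjoint interiors and that $\bigcup_{\bv}\Delta_{\bv}=\dR^{d}$ (the subgradient map of the superlinear convex $h$ is onto), the $\Delta_{\bv}$ with $\bv\in\cC_{\Lambda}$ tile $\dR^{d}/e\,B\Lambda$. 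Therefore $\sum_{\bv\in\cC_{\Lambda}}\rvol(\Delta_{\bv})=\rvol(\dR^{d}/e\,B\Lambda)=e^{d}\det(B)\,\rvol(\Lambda)=e^{d}\det(q_{ij})\,\rvol(\Lambda)$, using $q_{ij}=b_{ij}$ and $\det B>0$. Combining, $e^{d}\deg_L(A)=d!\,e^{d}\det(q_{ij})\,\rvol(\Lambda)=e^{d}H_{q}\,\rvol(\Lambda)$, and cancelling $e^{d}$ gives $\deg_L(A)=\rvol(\Lambda)H_{q}$.

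The main obstacle is the combination of the existence claim in the second paragraph with the tiling assertion: one must know the quasi-periodic datum $e\,z_{\blambda}$ of the ample $L^{\otimes e}$ is realized by a convex rational polytopal $h$ on an admissible $\cC_{\Lambda}$ (so Proposition \ref{prop_mumford} applies and $\widetilde\cL$ is nef), and one must make precise that the subgradient map of $h$ induces a measure-preserving, degree-one self-map of real tori --- equivalently, that the real Monge--Amp\`ere mass of $h$ on $\dR^{d}/\Lambda$ equals $e^{d}\det(b_{ij})\rvol(\Lambda)$. The toric intersection formula and the bookkeeping that produces $\ell$ from the cocycle are routine. (Alternatively one could invoke a known computation of the canonical measures of totally degenerate abelian varieties, or Riemann--Roch on $A$, but the Mumford-model argument above stays inside the present framework and is self-contained modulo standard toric geometry; it also foreshadows the limit formula of Theorem \ref{theo_measure}.)
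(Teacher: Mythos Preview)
Your argument is correct and takes a genuinely different route from the paper. The paper's proof is a short appeal to the classical theory of abelian varieties: it considers the polarization isogeny $\phi_L:A\to\check A$, whose lift to tori restricts on lattices to the homomorphism $\sigma:M\to\check M$; one computes $[\check M:\sigma(M)]=\rvol(\Lambda)\det(q_{ij})$, and then combines $\deg(\phi_L)=[\check M:\sigma(M)]^2$ (from \cite{BL}) with Riemann--Roch $(\deg_L(A)/d!)^2=\deg(\phi_L)$ to conclude. Your approach instead stays inside the toric/tropical framework: you compute the total Chambert--Loir mass on a single Mumford model via the toric degree formula $c_1(\widetilde\cL)^d|Y_{\bv}=d!\,\rvol(\Delta_{\bv})$ and then use that the Legendre-dual polytopes $\Delta_{\bv}$ tile a fundamental domain for $eB\Lambda$. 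The paper's proof is shorter and relies only on standard facts about abelian varieties, while yours requires constructing the polytopal model and justifying the tiling (both of which you rightly flag); but it has the virtue of remaining within the measure-theoretic setup and, as you observe, it anticipates the mechanism behind Theorem~\ref{theo_measure} --- the paper later invokes exactly the toric degree formula you use (from \cite{Fu}, via \cite{Gu2}) in Step~3 of that proof. Your final parenthetical mentions Riemann--Roch on $A$ as an alternative; that is precisely the paper's route.
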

\begin{proof}
Consider the morphism $\phi_L:A\rightarrow \check{A}$ associated to
$L$. Its lifting $T\rightarrow\check{T}$ restricts to
$\mu:M\rightarrow\check{M}$ on $M$. It is easy to see that $\mu$ is
injective since $L$ is ample and
 \begin{equation*}
 \deg(\mu)=[\check{M}:\mu(M)]=\rvol(\Lambda)^{-1}\det\left(b(\blambda_i,\blambda_j)\right)_{i,j=1,...,d}
 =\rvol(\Lambda)\det\left(q_{ij}\right)_{i,j=1,...,d}.
 \end{equation*}
By \cite[Theorem 6.15]{BL}, $\deg(\phi_L)=\deg(\mu)^2$ and by the
Riemann-Roch theorem \cite[\S16]{Mu1},
$(\deg_L(A)/d!)^2=\deg(\phi_L)$. Since $L$ is ample, $\deg_L(A)>0$
and hence equals $\rvol(\Lambda)H_q$.\\
\end{proof}

{\em Toric metrized line bundles.} The following proposition is due
to Gubler.

\begin{prop}\label{prop_metric}
Let $L=(T\times\dG_{a,k})/M$ be a line bundle on $A$ given by a
cocycle $(Z_{\mu})_{\mu\in M}$ as above. Let $\cA$ be a formal
$k^{\circ}$-model determined by a rational polytopal decomposition
$\cC_{\Lambda}$ of $\dR^d/\Lambda$ given by $\cC$.\\
(a) There is a one-to-one correspondence between all formal metrics
of $L$, i.e., formal model $\cL$ of $L^e$ (with $e$ minimal) on
$\cA$, with trivialization $(\cU_{\Delta})_{\Delta\in\cC_{\Lambda}}$
and functions $g\in\sC_{\rrpoly}(\dR^d)$ satisfying
 \begin{equation}\label{green}
 g(\bx+\blambda)=g(\bx)+z_{\blambda}(\bx);\qquad\blambda\in\Lambda,\;\bx\in\dR^d.
 \end{equation}
Moreover, if we denote by $\|\;\|$ the corresponding formal metric
on $L$, then we have
 \begin{equation}\label{metric}
 g\circ\tau=-\log\circ p^{*}\|1\|
 \end{equation}
 on $T$, where $p:\dR^d\rightarrow\dR^d/\Lambda$ is the
 projection.\\
(b) The reduction $\widetilde{\cL}$ is ample if and only if $g$ is
strongly polytopal convex with respect to $\cC$, i.e., it is convex
and the maximal connected subsets on which $g$ is affine are
$\Delta\in\cC$ with $\rdim(\Delta)=d$.
\end{prop}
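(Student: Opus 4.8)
The plan is to run the toric dictionary throughout: pull everything back to $T^{\ran}$, where $L^{\ran}$ is canonically trivial, and translate formal line-bundle data on Mumford's model $\cA$ into piecewise-affine data on $\dR^d$, using that the charts $\cU_{\Delta}$ of $\cA$ are, up to the $M$-action, affine toric over $k^{\circ}$. So part (a) is really a statement about invertible functions on toric formal affine opens, and part (b) is the classical ampleness criterion for toric varieties applied on the special fibre.

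For (a) I would first pass from a formal metric to a function. A formal metric is a line bundle $\cL$ on $\cA$ with $\cL_{\eta}\cong(L^e)^{\ran}$ together with local frames $1_{\Delta}$ over the $\cU_{\Delta}$. Pulling back to $T^{\ran}$ and comparing $1^{\otimes e}$ (the $e$-th power of the canonical trivialization of $L^{\ran}$) with $1_{\Delta}$, the ratio is an invertible analytic function on the preimage of $(\cU_{\Delta})_{\eta}$; since $\cU_{\Delta}$ is toric, such a function is a monomial times a unit with nonzero constant reduction, so minus the logarithm of its absolute value factors through $\tau$ as an affine function with integral linear part and, by the normalization $\log|k^{\times}|\subset\dQ$, rational constant term. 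Patching over $\cC$ and dividing by $e$ yields $g\in\sC_{\rrpoly}(\dR^d)$, affine on every maximal $\Delta\in\cC$; because each $1_{\Delta}$ is a frame of $\cL$ one has $\|1_{\Delta}\|\equiv1$ on $(\cU_{\Delta})_{\eta}$, which after unwinding gives \eqref{metric}, and the $M$-descent datum of $L$ — the cocycle $(Z_{\mu})$ — is exactly the relation \eqref{green} on $g$ (the computation already recorded in the ``Line bundles on $A$'' paragraph, with $z_{\blambda}=-\log|Z_{\mu}|$ for $\blambda=\tau(\mu)$). Conversely, starting from such a $g$ I would take $e$ minimal so that the slopes and vertex values of $eg$ are integral, declare the transition functions on $\cU_{\Delta}\cap\cU_{\Delta'}$ to be the monomials $T^{e(\bm_{\Delta}-\bm_{\Delta'})}$ scaled by a constant of the prescribed valuation, check the cocycle identity, and use \eqref{green} to see that this datum descends along $M$ to a line bundle $\cL$ on $\cA$ with $\cL_{\eta}\cong(L^e)^{\ran}$; the two assignments are then checked to be mutually inverse. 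A general $g\in\sC_{\rrpoly}$ satisfying \eqref{green} is handled by first passing to the common refinement of $\cC$ with the linearity domains of $g$ and the corresponding refined Mumford model, so that every formal metric of $L$ is accounted for.

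For (b) I would restrict $\cL$ to the special fibre $\widetilde{\cA}=\bigcup_{\bv}Y_{\bv}$. Each $Y_{\bv}$ is the toric variety attached to the fan of cones of $\cC$ emanating from the vertex $\bv$, and $\widetilde{\cL}|_{Y_{\bv}}$ is the toric line bundle whose support function is the germ of $g$ at $\bv$, normalized by subtracting an affine function (which does not affect ampleness). By the standard toric criterion $\widetilde{\cL}|_{Y_{\bv}}$ is ample iff this support function is strictly convex on the fan at $\bv$, i.e.\ iff $g$ is convex near $\bv$ and its maximal affine domains there are exactly the $d$-dimensional cones of $\cC$ at $\bv$. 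Since a line bundle on a proper scheme is ample iff its restriction to every irreducible component is ample, running this over all $\bv$ shows $\widetilde{\cL}$ is ample iff $g$ is convex with maximal affine domains exactly the $d$-dimensional polytopes of $\cC$, which is precisely the definition of strongly polytopal convex with respect to $\cC$; the last equivalence uses the elementary fact that a convex function is affine on exactly the maximal cells of a periodic polytopal decomposition iff it is strictly convex at every vertex.

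The main obstacle I expect is the toric bookkeeping in (a): making Mumford's charts $\cU_{\Delta}$ explicit enough to (i) justify that invertible functions on them have piecewise-affine, $\dQ$-valued valuations along $\tau$, and (ii) match the lattice-descent datum of $L$ with \eqref{green} while pinning down the minimal $e$. Once $\cL$ has been expressed through $g$, part (b) reduces to ``ampleness equals strict convexity of the support function'' for toric varieties, applied component by component; this, together with the chart analysis, is carried out in detail in Gubler's work, which I would follow.
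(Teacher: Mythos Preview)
Your proposal is correct and is essentially the same approach as the paper's: the paper's proof simply cites Gubler \cite[Proposition~6.6, Corollary~6.7]{Gu2} for both parts, and what you have written is an accurate outline of the toric dictionary argument carried out there (translating formal models into piecewise-affine data via invertible functions on the toric charts $\cU_{\Delta}$ for (a), and the strict-convexity ampleness criterion on each toric component $Y_{\bv}$ for (b)). You even note explicitly that you would follow Gubler's work for the details, which is precisely what the paper does.
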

\begin{proof}
For (a), by \eqref{green} and the fact that there exist
$\bm_{\blambda}\in\dZ^d$ for all $\blambda\in\Lambda$ such that
$b(\bx,\blambda)=\bm_{\blambda}\cdot\bx$, we can find a smallest
integer $e\geq1$ such that $e\cdot g$ has integer gradient
everywhere. Then by \cite[Proposition 6.6]{Gu2}, $e\cdot g$
determines a formal model $\cL$ of $L^e$. Hence $g$ determines a
formal metric on $L$. Then last identity \eqref{metric} follows from
{\em loc. cit.}

For (b), it follows from \cite[Corollary 6.7]{Gu2}.
\end{proof}

\begin{defn}\label{defn_toric}
Given $L$ as above, we call a metric determined by the above
proposition a {\em toric formal metric} and the corresponding
function $g$ the associated {\em formal Green function}. We denote
by $\sG_{\rfor}(L)$ the set of all formal Green functions of $L$ and
$\sG_+(L)$ the set of all uniform limits of formal Green functions
of $L$ associated to semi-positive toric formal metrics which we
call {\em semi-positive Green functions}. It is easy to see that
$g\in\sG_+(L)$ also satisfies \eqref{green} and the metric
determined by \eqref{metric} is semi-positive in its original sense.
Similarly, we denote by $\sG_{\rint}(L)$ the set of deference of
functions in $\sG_+(L')$ and $\sG_+(L'')$ with
$L=L'\otimes(L'')^{-1}$ which we call {\em integrable Green
functions}, hence the corresponding metric is integrable. Moreover,
the set of integrable Green functions for all line bundles $L$ over
$A$: $\sG_{\rint}(A)=\bigcup\sG_{\rint}(L)$ is a torsion-free
$\dZ$-module. All metrized line bundles corresponding to the Green
functions in $\sG_{\rint}(A)$ all called {\em toric metrized line
bundles}. At last we denote by $\sG(L)$ the set of continuous real
functions satisfying \eqref{green} and we simply call them {\em
Green functions} of $L$.
\end{defn}

The following proposition provides a certain large class of
semi-positive Green functions for an ample line bundle.

\begin{prop}
Let $L$ be an ample line bundle and $g\in\sG(L)\cap\sC^2(\dR^d)$
such that the matrix $(g_{ij}(\bx))_{i,j=1,...,d}$ is semi-positive
definite everywhere, then $g$ is inside $\sG_+(L)$.
\end{prop}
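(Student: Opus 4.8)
The plan is to show that a $\sC^2$ Green function $g$ with semi-positive definite Hessian everywhere can be uniformly approximated by formal Green functions associated to semi-positive toric formal metrics, hence lies in $\sG_+(L)$ by definition. Since $L$ is ample, the bilinear form $b$ is positive definite, and $z_\blambda(\bx) = z_\blambda(\bzero) + b(\bx,\blambda)$; the quadratic function $q(\bx) = \tfrac12 b(\bx,\bx)$ satisfies $q(\bx+\blambda) - q(\bx) = b(\bx,\blambda) + \tfrac12 b(\blambda,\blambda)$, so $g - q$ differs from a $\Lambda$-periodic function only by the constants $z_\blambda(\bzero) - \tfrac12 b(\blambda,\blambda)$; in any case $h := g - q$ is a bounded $\sC^2$ function on $\dR^d$ after absorbing the affine discrepancies, and its Hessian is $(g_{ij}) - (q_{ij}) \geq -(q_{ij})$. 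The point of subtracting $q$ is that $g$ itself, with merely semi-positive Hessian, is convex but perhaps not \emph{strongly} polytopal convex; we want to perturb it to a genuinely strongly convex polytopal function.

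First I would reduce to the strictly convex case: for $\epsilon > 0$ set $g_\epsilon = (1-\epsilon) g + \epsilon (q + c_\epsilon)$ where $c_\epsilon$ is chosen so that $g_\epsilon$ still satisfies \eqref{green} (this works because both $g$ and $q$ satisfy the cocycle relation \eqref{green} up to affine terms, and the convex combination of the two cocycles is again the cocycle $z_\blambda$). Then $g_\epsilon \to g$ uniformly on the fundamental domain $\overline{\fF}$ as $\epsilon \to 0$ (using $g - q$ bounded), and the Hessian of $g_\epsilon$ is $(1-\epsilon)(g_{ij}) + \epsilon(q_{ij}) \geq \epsilon(q_{ij})$, which is \emph{positive} definite everywhere with a uniform lower bound $\epsilon\lambda_{\min}(q_{ij})$. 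So it suffices to treat the case where $g$ itself has uniformly positive definite Hessian.

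Next, for such a strictly convex $g \in \sC^2$, I would approximate it by piecewise-affine convex functions. The natural device is the Legendre–Fenchel transform: $g^* (\by) = \sup_{\bx} (\by\cdot\bx - g(\bx))$ restricted to the appropriate domain, and then take a finite rational sample $\{\by_1,\ldots,\by_N\}$ of the image of $\nabla g$ and set $g_N(\bx) = \max_i (\by_i\cdot\bx - g^*(\by_i))$. Since $g$ has uniformly positive definite Hessian and the relevant gradient image is compact modulo the lattice action, a net of rational sample points of mesh $\to 0$ gives $g_N \to g$ uniformly on $\overline{\fF}$, $g_N$ is convex and piecewise affine, and by perturbing the $\by_i$ slightly we can take them in $\dZ^d$-rational positions so that $g_N \in \sC_{\rrpoly}(\dR^d)$; the cocycle relation \eqref{green} is inherited because translating $\bx \mapsto \bx+\blambda$ permutes the affine pieces up to adding $z_\blambda$. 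Finally, $g_N$ is convex, and after one further $\epsilon$-perturbation by $q$ as above it becomes strongly polytopal convex with respect to a suitable rational refinement $\cC$ (this uses Lemma \ref{lem_poly} to see that the maximal affine regions are rational polytopes of full dimension). By Proposition \ref{prop_metric}(b), the corresponding formal metric has ample reduction, hence is semi-positive, so $g_N$ (and its $\epsilon$-perturbation) lies in $\sG_{\rfor}(L)$ with semi-positive toric formal metric; passing to the uniform limit in $N$ and then in $\epsilon$ puts $g$ in $\sG_+(L)$.

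The main obstacle I anticipate is the bookkeeping around the cocycle relation \eqref{green}: the approximants must satisfy \emph{exactly} $g_N(\bx+\blambda) = g_N(\bx) + z_\blambda(\bx)$, not merely approximately, since $\sG_{\rfor}(L)$ consists only of functions obeying this identity on the nose. Handling this cleanly suggests working throughout with $h = g - q$ (which, modulo the harmless affine constants, is genuinely $\Lambda$-periodic), approximating $h$ by $\Lambda$-periodic polytopal functions $h_N$, and then setting $g_N = h_N + q$; strict convexity of $g_N$ comes for free from the curvature of $q$ once $h_N$ is convex, so in fact one only needs $h_N$ convex (not strictly), which is exactly what a Legendre-transform approximation of the convex function $h$ delivers on a compact fundamental domain. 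The only subtlety then is ensuring $h_N$ is \emph{rational} polytopal, which is arranged by choosing the finitely many affine pieces to have rational slopes and rational values at vertices — possible because rational affine functions are dense in the relevant finite-dimensional space and small perturbations preserve convexity on a compact set with room to spare.
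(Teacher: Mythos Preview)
Your overall plan---reduce to strictly convex $g$ via $g_t=(1-t)g_{\rcan}+tg$, then approximate by the supremum of tangent affine functions (equivalently, a discretized Legendre transform)---is exactly the paper's strategy. The paper samples at the full grid $\blambda_{\bj}$, $\bj\in(\tfrac1N\dZ)^d$, perturbs the tangent line at each point to rational data (choosing the perturbation periodically on a fundamental domain), and sets $g^{(N)}=\sup_{\bj} g^{(N)}_{\blambda_{\bj}}$; the cocycle relation \eqref{green} then holds exactly because $g-g^{(N)}_{\blambda_{\bj}}$ is $\Lambda$-periodic in $\bj$ (Lemma~\ref{lem_sp1}). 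Your one-line justification (``translating $\bx\mapsto\bx+\blambda$ permutes the affine pieces up to adding $z_\blambda$'') is the right mechanism, but note it requires an \emph{infinite}, $\Lambda$-stable sample set: a genuinely finite sample $\{\by_1,\dots,\by_N\}$ produces a $g_N$ that is eventually affine in each direction and so cannot satisfy \eqref{green}.

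Your alternative route in the final paragraph, however, has a real gap. The function $h=g-g_{\rcan}$ is $\Lambda$-periodic but is \emph{not} convex: its Hessian is $(g_{ij})-(q_{ij})$, and a nonconstant bounded $\sC^2$ function on $\dR^d$ is never convex. Hence there is no Legendre-transform approximation of $h$, and no sequence of convex $\Lambda$-periodic polytopal $h_N$ converging to $h$ exists. Relatedly, the ``further $\epsilon$-perturbation by $q$'' you invoke to force strong polytopal convexity would destroy polytopality altogether, since $q$ is quadratic. The paper sidesteps both issues by working with $g$ itself throughout: the approximant $g^{(N)}$ is automatically convex, its maximal affine pieces are shown to be bounded convex polytopes via a gradient comparison (Lemmas~\ref{lem_sp4}--\ref{lem_sp5}), and for $N$ large they inject into $\dR^d/\Lambda$, so that Lemma~\ref{lem_poly} and Proposition~\ref{prop_metric}(b) apply directly without any additional perturbation.
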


\begin{proof} The proof is divided into several steps.\\

{\em Step 1.} We reduce to the case where
$(g_{ij}(\bx))_{i,j=1,...,d}$ is positive definite for all
$\bx\in\dR^d$.

First, there exists a function $g_{\rcan}\in\sG(L)\cap\sC^2(\dR^d)$
such that the matrix $\left((g_{\rcan})_{ij}(\bx)\right)$ is
positive definite. By \cite[Lemma 6.5.2 (4)]{FP}, there is a group
homomorphism $c:\Lambda\rightarrow\dQ$ such that
$z_{\blambda}(\bzero)=q(\blambda)+c(\blambda)$. We linearly extend
$c$ to $\dR^d$ and define $g_{\rcan}=q+c$, then
$g_{\rcan}\in\sG(L)\cap\sC^2(\dR^d)$ and
$\left((g_{\rcan})_{ij}(\bx)\right)$ is a constant positive definite
matrix. Next, for any $g\in\sG(L)\cap\sC^2(\dR^d)$, $g_{ij}$ is
$\Lambda$-periodic for any $(i,j)$ and $g-g_{\rcan}$ is a
$\Lambda$-periodic $\sC^2$-function. For any $g$ in the proposition,
let $f=g-g_{\rcan}$ and $g_t=g_{\rcan}+tf$ for $t\in[0,1]$, then
$g_t\rightarrow g_1=g$ when $t\rightarrow1$ and
$\left((g_t)_{ij}(\bx)\right)$ are positive definite for all $t<1$.
The claim follows.\\

{\em Step 2.} Now fix a function $g$ as in the proposition but with
the condition that $(g_{ij}(\bx))_{i,j=1,...,d}$ is positive
definite, we are going to construct a sequence of functions
$g_n\in\sG_{\rfor}(L)\cap\sG_+(L)$ approaching $g$. For any $\bu\in
S^{d-1}$, the function $\nabla_{\bu}\nabla_{\bu}g$ is
$\Lambda$-periodic and positive, hence there exist $0<h_g<H_g$ such
that $h_g<\nabla_{\bu}\nabla_{\bu}g(\bx)<H_g$ for any $\bu\in
S^{d-1}$ and $\bx\in\dR^d$.

Let $N$ be a sufficiently large integer, for
$\bj=(j_1,...,j_d)\in\left(\frac{1}{N}\dZ\right)^d$, we let
$\blambda_{\bj}=j_1\blambda_1+\cdots+j_d\blambda_d$. For each $\bj$
such that $\blambda_{\bj}\in\fF$, we choose a positive number
$\epsilon_N(\bj)<\frac{1}{N^2}$ such that
$g(\blambda_{\bj})-\epsilon_N(\bj)\in\dQ$ and a vector
$\bepsilon_N(\bj)$ such that $\|\bepsilon_N(\bj)\|<\frac{1}{N}$,
$\nabla g(\blambda_{\bj})-\bepsilon_N(\bj)\in\dQ^d$ and the graph of
the function
 \begin{equation*}
 g^{(N)}_{\blambda_{\bj}}(\bx)=\left(\nabla
 g(\blambda_{\bj})-\bepsilon_N(\bj)\right)(\bx-\blambda_{\bj})+g(\blambda_{\bj})-\epsilon_N(\bj)
 \end{equation*}
is below the graph of $g$ which is possible since $g$ is strictly
convex. For general $\bj$, we let $\bj_0$ be the unique element in
$\left(\frac{1}{N}\dZ\right)^d$ such that $\bj-\bj_0\in\dZ^d$ and
$\blambda_{\bj_0}\in\fF$. Then we define
 \begin{equation*}
 g^{(N)}_{\blambda_{\bj}}(\bx)=\left(\nabla
 g(\blambda_{\bj})-\bepsilon_N(\bj_0)\right)(\bx-\blambda_{\bj})+g(\blambda_{\bj})-\epsilon_N(\bj_0).
 \end{equation*}
By construction, $g^{(N)}_{\blambda_{\bj}}\in\sC_{\rrpoly}(\dR)$. We
need the following lemma.

 \begin{lem}\label{lem_sp1}
 For each $\bj\in\left(\frac{1}{N}\dZ\right)^d$ and $\blambda\in\Lambda$, we have
  \begin{equation*}
  g(\bx)-g^{(N)}_{\blambda_{\bj}}(\bx)=g(\bx+\blambda)-g^{(N)}_{\blambda_{\bj}+\blambda}(\bx+\blambda).
  \end{equation*}
 \end{lem}
 \begin{proof}
 By definition, we have
  \begin{equation*}
  \begin{split}
  &g^{(N)}_{\blambda_{\bj}+\blambda}(\bx+\blambda)-g^{(N)}_{\blambda_{\bj}}(\bx)\\
  =&\left(\nabla g(\blambda_{\bj}+\blambda)-\nabla
  g(\blambda_{\bj})\right)(\bx-\blambda_{\bj})+g(\blambda_{\bj}+\blambda)-g(\blambda_{\bj})\\
  =&\nabla
  z_{\blambda}(\blambda_{\bj})(\bx-\blambda_{\bj})+z_{\blambda}(\blambda_{\bj})\\
  =&z_{\blambda}(\bx)\\
  =&g(\bx+\blambda)-g(\bx)
  \end{split}
  \end{equation*}
 where the third equality is because $z_{\blambda}$ is affine.\\
 \end{proof}

{\em Step 3.} We define a function $g^{(N)}$ by
 \begin{equation*}
 g^{(N)}(\bx)=\sup\limits_{\bj}g^{(N)}_{\blambda_{\bj}}(\bx)
 \end{equation*}
which is less than $g(\bx)$. We have

 \begin{lem}\label{lem_sp2}
 For any compact subset $V\subset\dR^d$, there exists a finite subset
 $\cJ_V\subset\left(\frac{1}{N}\dZ\right)^d$ such that
  \begin{equation*}
  g^{(N)}(\bx)=\max\limits_{\bj\in\cJ_V}g^{(N)}_{\blambda_{\bj}}(\bx)
  \end{equation*}
 for all $\bx\in V$.
 \end{lem}
 \begin{proof}
 We only need to prove that for given $M\in\dR$, there are only
 finitely many $\bj$ such that $g^{(N)}_{\blambda_{\bj}}(\bx)\geq M$
 for some $\bx\in V$.
 For a given $\bj$, we try to give a lower bound for the difference
 $g(\bx)-g^{(N)}_{\blambda_{\bj}}(\bx)$. Let $\bu=\bx-\blambda_{\bj}$, by definition,
  \begin{equation*}
  \begin{split}
  &g(\bx)-g^{(N)}_{\blambda_{\bj}}(\bx)\\
  >&\left(g(\bx)-g^{(N)}_{\blambda_{\bj}}(\bx)\right)-\left(g(\blambda_{\bj})-g^{(N)}_{\blambda_{\bj}}(\blambda_{\bj})
  \right)\\
  =&\int_0^1\frac{\rd}{\rd
  t}\left(g(\blambda_{\bj}+t\bu)-g^{(N)}_{\blambda_{\bj}}(\blambda_{\bj}+t\bu)\right)\rd
  t\\
  =&\int_0^1\nabla_{\bu}g(\blambda_{\bj}+t\bu)\rd
  t-\nabla_{\bu}g(\blambda_{\bj})-\bepsilon_N(\bj_0)\cdot\bu\\
  =&\int_0^1\left(\nabla_{\bu}g(\blambda_{\bj})+
  \int_0^t\nabla_{\bu}\nabla_{\bu}g(\blambda_{\bj}+s\bu)\rd s\right)\rd
  t-\nabla_{\bu}g(\blambda_{\bj})-\bepsilon_N(\bj_0)\cdot\bu\\
  =&\int_0^1\int_0^t\nabla_{\bu}\nabla_{\bu}g(\blambda_{\bj}+s\bu)\rd
  s\rd t-\bepsilon_N(\bj_0)\cdot\bu\\
  >&\frac{h_g\|\bu\|^2}{2}-\|\bepsilon_N(\bj_0)\|\cdot\|\bu\|\\
  >&\frac{h_g\|\bu\|^2}{2}-\frac{\|\bu\|}{N}.
  \end{split}
  \end{equation*}

We see that there is $N_M>0$ such that $\|\blambda_{\bj}-\bx\|>N_M$
implies $g^{(N)}_{\blambda_{\bj}}(\bx)<M$. Hence the lemma follows.
\end{proof}

The above lemma implies that $g^{(N)}\in\sC_{\rrpoly}(\dR^d)$ and is
convex. On the other hand, if
$g^{(N)}(\bx)=g^{(N)}_{\blambda_{\bj}}(\bx)$ for some $\bj$, then
$g^{(N)}(\bx+\blambda)=g^{(N)}_{\blambda_{\bj}+\blambda}(\bx+\blambda)$
for all $\blambda\in\Lambda$ since by Lemma \ref{lem_sp1},
$g^{(N)}_{\blambda_{\bj'}}(\bx+\blambda)>g^{(N)}_{\blambda_{\bj}+\blambda}(\bx+\blambda)$
will imply that
$g^{(N)}_{\blambda_{\bj'}-\blambda}(\bx)>g^{(N)}_{\blambda_{\bj}}(\bx)$
which is a contradiction. Again by the same lemma, we conclude that
$g^{(N)}$ satisfies \eqref{green}, hence is inside $\sG(L)$.\\

{\em Step 4.} Before proving that $g^{(N)}$ is semi-positive formal,
we would like bound the difference of it and $g$.

\begin{lem}\label{lem_sp3}
For any $\bx\in\dR^d$, we have
 \begin{equation*}
 0\leq g(\bx)-g^{(N)}(\bx)<\frac{R_{\fF}^2\cdot
 H_g+2R_{\fF}+2}{2N^2}.
 \end{equation*}
\end{lem}
\begin{proof}
The proof follows the same line as in Lemma \ref{lem_sp2}. Hence we
have
 \begin{equation*}
 g(\bx)-g^{(N)}(\bx)<\frac{H_g}{2}\|\blambda_{\bj}-\bx\|^2+\frac{1}{N}\|\blambda_{\bj}-\bx\|+\frac{1}{N^2}
 \end{equation*}
for any $\bj$. In fact, we can choose $\bj$ such that
$\|\blambda_{\bj}-\bx\|\leq\frac{R_{\fF}}{N}$. Hence the lemma
follows.
\end{proof}

Conversely, we have the following lemma on the estimate of the
gradient which will be used also in the next section.

\begin{lem}\label{lem_sp4}
Let $f$ be any convex rational polytopal continuous function on
$\dR^d$. Suppose that $|f(\bx)-g(\bx)|<\epsilon$ for any
$\bx\in\dR^d$, then for any $d$-dimensional polytope $\Delta$ on
which $f$ is affine, $\|\bm_{\Delta}-\nabla
g(\bx_0)\|\leq2\sqrt{\epsilon H_g}$ for all $\bx_0\in\Delta$, where
$\bm_{\Delta}$ is the gradient of $f$ on $\Delta$.
\end{lem}
\begin{proof}
By continuity, we can assume that $\bx_0\in\rint(\Delta)$ and
$\bx_0=\bzero$. We only need to prove the lemma for $\tilde{f}$ and
$\tilde{g}$ where $\tilde{f}(\bx)=f(\bx)-\nabla
g(\bzero)\cdot(\bx)-g(\bzero)$ and $\tilde{g}(\bx)=g(\bx)-\nabla
g(\bzero)\cdot(\bx)-g(\bzero)$. In this case, we need to prove that
$\|\bm_{\Delta}\|<2\sqrt{\epsilon H_g}$. For any $\bu\in S^{d-1}$,
we assume that $\bm_{\Delta}\cdot\bu\geq0$; otherwise, we take
$-\bu$. For $t>0$, consider
\begin{equation*}
\begin{split}
&\left(\tilde{f}(t\bu)-\tilde{g}(t\bu)\right)-\left(\tilde{f}(\bzero)-\tilde{g}(\bzero)\right)\\
=&\int_0^t\left(\nabla_{\bu}\tilde{f}(s\bu)-\nabla_{\bu}\tilde{g}(s\bu)\right)\rd
s\\
\geq&\bm_{\Delta}\cdot\bu\;t-\int_0^t\int_0^s\nabla_{\bu}\nabla_{\bu}\tilde{g}(r\bu)\rd
r\rd s\\
>&\bm_{\Delta}\cdot\bu\;t-\frac{H_gt^2}{2}
\end{split}
\end{equation*}
where the first inequality is due to the assumption that $f$ is
convex. On the other hand, it is less than $2\epsilon$, hence we
have
 \begin{equation*}
 \frac{H_gt^2}{2}-\bm_{\Delta}\cdot\bu\;t+2\epsilon>0
 \end{equation*}
for all $t>0$. Hence $\bm_{\Delta}\cdot\bu<2\sqrt{\epsilon H_g}$ and
then $\|\bm_{\Delta}\|\leq2\sqrt{\epsilon H_g}$.
\end{proof}

The above lemma immediately implies the following
 \begin{lem}\label{lem_sp5}
 Let $f$ and $\Delta$ be as above, then for any $\bx,\bx'\in\Delta$,
 the distance $\|\bx-\bx'\|\leq\frac{4}{h_g}\sqrt{\epsilon H_g}$.
 \end{lem}

In particular, if we apply the above lemma to $g^{(N)}$, we see that
$\Delta$ is compact for any maximal connected subset $\Delta$ on
which $g^{(N)}$ is affine.\\

{\em Step 5.} We would like to apply Lemma \ref{lem_poly}. Hence we
need to show that $\Delta$ is convex. By construction, $g^{(N)}$
coincides with some $g^{(N)}_{\blambda_{\bj}}$ restricted to
$\Delta$. Suppose that there are $\bx_0,\bx_1\in\Delta$ and
$t\in(0,1)$ such that $\bx_t=t\bx_1+(1-t)\bx_0\not\in\Delta$, then
$g^{(N)}(\bx_t)=g^{(N)}_{\blambda_{\bj'}}(\bx_t)$ for some
$\bj'\neq\bj$. Again by construction,
$g^{(N)}_{\blambda_{\bj'}}(\bx_t)>g^{(N)}_{\blambda_{\bj}}(\bx_t)$.
Hence there is one point $\bx\in\{\bx_0,\bx_1\}$ such that
$g^{(N)}_{\blambda_{\bj'}}(\bx)>g^{(N)}_{\blambda_{\bj}}(\bx)$ which
is a contradiction. Now by Lemma \ref{lem_poly}, $g^{(N)}$
determines a $\Lambda$-periodic rational polytopal decomposition
$\cC$ of $\dR^d$.

Finally, we prove that for any $\Delta\in\cC$, $\Delta$ maps
bijectively to its image under the projection
$p:\dR^d\rightarrow\dR^d/\Lambda$ when $N$ is sufficiently large. We
can assume $\dim(\Delta)=d$. By Lemma \ref{lem_sp3} and
\ref{lem_sp5}, we see that this holds if
 \begin{equation*}
 \frac{4}{h_g}\sqrt{\frac{R_{\fF}^2\cdot H_g+2R_{\fF}+2}{2N^2}\cdot
 H_g}<2r_g.
 \end{equation*}
Now by Proposition \ref{prop_metric}, $g^{(N)}$ is a semi-positive
formal Green function for large $N$. Hence the proposition follows
by Lemma \ref{lem_sp3}.
\end{proof}

The proposition has the following direct corollary.

\begin{cor}
For any line bundle $L$ on $A$, if $g\in\sG(L)\cap\sC^2(\dR^d)$,
then $g$ is integrable, i.e., $g\in\sG_{\rint}(L)$.
\end{cor}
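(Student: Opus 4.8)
The plan is to reduce the statement for a general line bundle $L$ to the case of an ample line bundle, where the preceding Proposition applies almost directly. The key observation is that the definition of $\sG_{\rint}(L)$ only requires writing $g$ as a difference $g'-g''$ of semi-positive Green functions for line bundles $L'$ and $L''$ with $L=L'\otimes(L'')^{-1}$, and that $\sG(L)\cap\sC^2(\dR^d)$ interacts nicely with tensor products: if $g_1\in\sG(L_1)$ and $g_2\in\sG(L_2)$ then $g_1+g_2\in\sG(L_1\otimes L_2)$, since the cocycle relation \eqref{green} is additive in the affine functions $z_{\blambda}$, and the sum of $\sC^2$-functions is again $\sC^2$.

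First I would fix an ample line bundle $L_0$ on $A$ and pick a reference Green function $g_0\in\sG(L_0)\cap\sC^2(\dR^d)$ whose Hessian $\left((g_0)_{ij}\right)$ is positive definite everywhere; the canonical choice $g_0=q_0+c_0$ built from the associated positive definite quadratic form as in Step 1 of the proof of the Proposition does the job. Now given $L$ and $g\in\sG(L)\cap\sC^2(\dR^d)$, choose an integer $n\geq 1$ large enough that $L''=L_0^{\otimes n}$ is ample and $L'=L\otimes L_0^{\otimes n}$ is ample — this is possible since $L_0$ is ample, so $L_0^{\otimes n}$ and $L\otimes L_0^{\otimes n}$ are both ample for $n\gg 0$. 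Set $g''=n\,g_0$ and $g'=g+n\,g_0$. Then $g''\in\sG(L'')\cap\sC^2(\dR^d)$ and $g'\in\sG(L')\cap\sC^2(\dR^d)$ by the additivity just noted.

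The remaining point is to check that, after enlarging $n$ if necessary, the Hessians of $g'$ and $g''$ are both semi-positive definite (in fact positive definite) everywhere, so that the preceding Proposition applies to each and gives $g'\in\sG_+(L')$, $g''\in\sG_+(L'')$. For $g''=n\,g_0$ this is immediate since $\left((g_0)_{ij}\right)$ is positive definite and constant. For $g'=g+n\,g_0$, the Hessian is $(g_{ij})+n\left((g_0)_{ij}\right)$; since $g_{ij}$ is $\Lambda$-periodic and continuous it is bounded on $\dR^d$, so its smallest eigenvalue is bounded below by some constant $-C$, and $(g_{ij})+n\left((g_0)_{ij}\right)$ is positive definite once $n$ times the smallest eigenvalue of $\left((g_0)_{ij}\right)$ exceeds $C$. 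Having fixed such $n$, the Proposition yields $g',g''\in\sG_+$ of the respective ample line bundles, and then $g=g'-g''\in\sG_{\rint}(L)$ by the definition of integrable Green functions, with $L=L'\otimes(L'')^{-1}$. I expect the only mild subtlety to be arranging a single $n$ that makes $L\otimes L_0^{\otimes n}$ ample \emph{and} makes the Hessian estimate work simultaneously, but both are open conditions on large $n$, so a common choice exists; everything else is a direct bookkeeping of the additive structure of Green functions.
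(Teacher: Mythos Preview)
Your proposal is correct and is exactly the natural argument the paper has in mind: the paper gives no explicit proof, stating only that this is a ``direct corollary'' of the preceding Proposition, and your twist-by-a-large-power-of-an-ample-$L_0$ argument (using the constant positive-definite Hessian of $g_{\rcan}$ and the $\Lambda$-periodicity, hence boundedness, of $g_{ij}$) is precisely the intended way to unpack that remark. There is nothing to add.
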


\section{A limit formula for the measure}
\label{sec_measure}

In this section, we prove a formula for the measure of metrics
determined by certain integrable Green functions.\\

{\em Measures on torus and mixed Hessian.} Recall that we have a
closed manifold $\dR^d/\Lambda$. Similar to $\dR^d$, we define the
space of real functions $\sC^l(\dR^d/\Lambda)$,
$\sC^{\infty}(\dR^d/\Lambda)$, $\sC^{k,\alpha}(\dR^d/\Lambda)$ and
also $\sC_{\geq0}^{?}(\dR^d/\Lambda)$,
$\sC_{>0}^{?}(\dR^d/\Lambda)$. A {\em measure} on $\dR^d/\Lambda$ is
a continuous linear functional
$\mu:\sC(\dR^d/\Lambda)\rightarrow\dR$; it is {\em semi-positive} if
$\mu(f)\geq0$ for all $f\in\sC_{\geq0}(\dR^d/\Lambda)$; {\em
positive} if $\mu(f)>0$ for $0\neq f\in\sC_{\geq0}(\dR^d/\Lambda)$.
The space of all measure (resp. semi-positive measure, positive
measure) is denoted by $\sM(\dR^d/\Lambda)$ (resp.
$\sM_{\geq0}(\dR^d/\Lambda)$, $\sM_{>0}(\dR^d/\Lambda)$). It is
endowed with the weak topology; i.e., a sequence
$\mu_n\rightarrow\mu$ if and only if $\mu_n(f)\rightarrow\mu(f)$ for
all $f\in\sC(\dR^d/\Lambda)$. Recall that we have the Lebesgue
measure $\rd\bx$ on $\dR^d/\Lambda$, hence the spaces of functions
$\sC^{?}(\dR^d/\Lambda)$ can be identified as a subset of measure by
integration which we denote by $\sM^{?}(\dR^d/\Lambda)$ for
$?=l;\;\infty;\;k,\alpha$. Under this identification, being
semi-positive (non-negative) or positive for a function coincides
with that for a measure. Hence we also introduce the notation
$\sM_{\geq0}^?(\dR^d/\Lambda)$ or $\sM_{>0}^?(\dR^d/\Lambda)$ for
their obvious meaning. We write $\mu\leq\mu'$ if
$\mu'-\mu\in\sM_{\geq0}(\dR^d/\Lambda)$. Finally, we denote by
$|\mu|$ the totally measure of $\mu$, i.e., $|\mu|=\mu(1)$. It is
easy to see that if $\mu$ is semi-positive and $|\mu|=0$, then
$\mu=0$.

\begin{defn}
For functions $g_1,...,g_d\in\sC^2(\dR^d)$, we define the so-called
{\em mixed Hessian} of $g_1,...,g_d$ to be the function
 \begin{equation*}
 \rHess_{g_1,...,g_d}(\bx)=\sum_{\mu,\nu\in\fS_d}(-1)^{\epsilon(\mu\nu)}\prod_{i=1}^d(g_i)_{\mu(i)\nu(i)}(\bx)
 \end{equation*}
where $\epsilon(\mu)=1$ (resp. $-1$) if $\mu$ is an even (resp. odd)
permutation. In particular, when $g_1=\cdots=g_d=g$,
 \begin{equation*}
 \rHess_g(\bx):=\rHess_{g,...,g}(\bx)=d!\det(g_{ij})_{i,j=1,...,d}
 \end{equation*}
is $d!$ times the determinant of the usual Hessian matrix of $g$.
\end{defn}

The following lemma is an easy calculus exercise.

\begin{lem}\label{lem_hess}
Let $L$ be an ample line bundle on $A$ and let
$g_1,...,g_d\in\sG(L)\cap\sC^3(\dR^d)$, then $\rHess_{g_1,...,g_2}$
is $\Lambda$-periodic and hence in $\sC^1(\dR^d/\Lambda)$. We have
 \begin{equation*}
 \int_{\dR^d/\Lambda}\rHess_{g_1,...,g_d}(\bx)\rd\bx=\deg_L(A).
 \end{equation*}
\end{lem}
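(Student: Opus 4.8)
The plan is to reduce the statement to the case of formal Green functions handled by Proposition~\ref{prop_mumford} together with the measure formula \eqref{measure}, and then to pass to the limit. First I would observe that both sides of the claimed identity are symmetric and $\dR$-multilinear in $g_1,\dots,g_d$ (the left side because $\rHess$ manifestly is, the right side by \eqref{degree} and linearity of $\deg_L$), and that both sides are continuous under uniform convergence on $\sG(L)\cap\sC^3$ when the relevant measures converge. By a polarization/diagonalization argument it therefore suffices to prove the formula when $g_1=\cdots=g_d=g$, i.e. to show $\int_{\dR^d/\Lambda}\rHess_g(\bx)\,\rd\bx=\deg_L(A)$ for every $g\in\sG(L)\cap\sC^3(\dR^d)$.

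Next, I would handle the \emph{affine/constant-Hessian} model case first. Taking $g=g_{\rcan}=q+c$ from Step~1 of the preceding proposition, we have $(g_{\rcan})_{ij}=q_{ij}$ constant, so $\rHess_{g_{\rcan}}=d!\det(q_{ij})=H_q$ is the constant appearing in Lemma~\ref{lem_deg}; hence $\int_{\dR^d/\Lambda}\rHess_{g_{\rcan}}\,\rd\bx=\rvol(\Lambda)H_q=\deg_L(A)$ by Lemma~\ref{lem_deg}. It then remains to show that the integral $\int_{\dR^d/\Lambda}\rHess_g\,\rd\bx$ is \emph{independent} of the choice of $g\in\sG(L)\cap\sC^3(\dR^d)$. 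For this, write an arbitrary such $g$ as $g_{\rcan}+f$ with $f$ a $\Lambda$-periodic $\sC^3$ function (as in Step~1, using that the difference of two Green functions of the same $L$ is $\Lambda$-periodic), set $g_t=g_{\rcan}+tf$, and compute $\frac{\rd}{\rd t}\int_{\dR^d/\Lambda}\rHess_{g_t}\,\rd\bx$. Since $\rHess_{g_t}$ is, up to the combinatorial constant, $d!\det(\text{Hess}\,g_t)$ and $\frac{\rd}{\rd t}\text{Hess}\,g_t=\text{Hess}\,f$, the derivative of the integrand is $d\cdot\rHess_{g_t,\dots,g_t,f}$, which one rewrites via integration by parts on the closed manifold $\dR^d/\Lambda$: each term of $\rHess_{\dots,f}$ contributes $\int (g_t)_{\mu(1)\nu(1)}\cdots f_{\mu(d)\nu(d)}\,\rd\bx$, and integrating by parts in the $x_{\nu(d)}$-variable moves a derivative off $f_{\mu(d)\nu(d)}$ onto the rest. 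The antisymmetrization built into the $\fS_d\times\fS_d$-sum then forces a cancellation of the resulting third-derivative terms pairwise (the standard fact that the mixed discriminant is a divergence in each slot), so $\frac{\rd}{\rd t}\int\rHess_{g_t}\,\rd\bx=0$; this is why $\sC^3$ is assumed. Hence the integral equals its value at $t=0$, namely $\deg_L(A)$, finishing the diagonal case and, by multilinearity, the general statement.

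The main obstacle I expect is making the integration-by-parts cancellation clean: one must check that the boundary terms genuinely vanish (they do, by $\Lambda$-periodicity on the compact torus, which is where "$\Lambda$-periodic, hence in $\sC^1(\dR^d/\Lambda)$" is used) and that the antisymmetrization really kills the surviving bulk terms. An alternative that sidesteps the hands-on computation would be to invoke \eqref{degree} directly: approximate $g$ uniformly by semi-positive formal Green functions $g_n$ (possible after first perturbing so the Hessian is positive definite, exactly as in Step~1 and Steps~2--5 of the previous proposition, for an auxiliary ample bundle, then using multilinearity to reduce the general $g$ to differences of such), use \eqref{measure} and Proposition~\ref{prop_mumford} to identify $c_1(\overline{L})^d$ as a sum of Dirac masses at the Shilov points $\xi_{\bv}$ with weights the toric intersection numbers, and match the total mass via \eqref{degree}; then the desired integral formula follows once one knows the limit measure has density $\rHess_g\,\rd\bx$ — but that last identification is precisely the content of the forthcoming Theorem~\ref{theo_measure}, so for the purposes of \emph{this} lemma the self-contained differentiation argument above is preferable. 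I would therefore present the multilinear reduction plus the $\frac{\rd}{\rd t}$-argument as the proof.
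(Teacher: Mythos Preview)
Your proposal is correct and follows essentially the same route as the paper: verify the identity at $g_{\rcan}$ using Lemma~\ref{lem_deg}, then reduce the general case to the vanishing of $\int_{\dR^d/\Lambda}\rHess_{f,g_2,\dots,g_d}\,\rd\bx$ for $\Lambda$-periodic $f$, which is established by integrating by parts on the torus and cancelling the remaining bulk terms via the $\fS_d$-antisymmetry (your observation that swapping two indices in the permutation kills the third-derivative terms is exactly the mechanism the paper uses). The only difference is packaging: you first polarize down to the diagonal and then run a $t$-derivative, whereas the paper skips the diagonal reduction and goes straight to the one-periodic-slot vanishing by multilinearity; also, your opening sentence about reducing to formal Green functions and passing to the limit describes the alternative you (rightly) discard at the end, not the argument you actually give, so you should drop it.
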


\begin{proof}
First, we check the case $g_1=\cdots=g_d=g_{\rcan}$. Then
 \begin{equation*}
 \int_{\dR^d/\Lambda}\rHess_{g_{\rcan}}(\bx)\rd\bx=\int_{\dR^d/\Lambda}H_q\rd\bx=\rvol(\Lambda)H_q
 \end{equation*}
which equals $\deg_L(A)$ by Lemma \ref{lem_deg}. In general, since
any two $g,g'\in\sG(L)\cap\sC^3(\dR^d)$ differ by a
$\Lambda$-periodic function $f\in\sC^3(\dR^d)$, by multi-linearity
and symmetry, we only need to prove that for such $f$ and
$g_2,...,g_d\in\sG(L)\cap\sC^3(\dR^d)$,
 \begin{equation}\label{hess_1}
 \int_{\dR^d/\Lambda}\rHess_{f,g_2,...,g_d}(\bx)\rd\bx=0.
 \end{equation}
By a linear change of coordinates, one can assume that
$\blambda_i=\be_i$ for $i=1,...,d$. Hence \eqref{hess_1} becomes
 \begin{equation*}
 \int_{[0,1]^d}\rHess_{f,g_2,...,g_d}(\bx)\rd x_1\cdots\rd x_d=0.
 \end{equation*}
For each $i=1,...,d$, let us denote by
 \begin{equation*}
 F^+_i=\{\bx=(x_1,...,x_d)\in [0,1]^d\;|\;x_i=1\};\quad
 F^-_i=\{\bx=(x_1,...,x_d)\in [0,1]^d\;|\;x_i=0\}.
 \end{equation*}
Then
 \begin{equation*}
 \begin{split}
 &\int_{[0,1]^d}\rHess_{f,g_2,...,g_d}(\bx)\rd x_1\cdots\rd x_d\\
 =&\int_{[0,1]^d}\sum_{\mu,\nu\in\fS_d}(-1)^{\epsilon(\mu\nu)}f_{\mu(1)\nu(1)}\prod_{i=2}^d(g_i)_{\mu(i)\nu(i)}
 \rd x_1\cdots\rd x_d\\
 =&\sum_{\mu,\nu\in\fS_d}(-1)^{\epsilon(\mu\nu)}
 \left(\int_{F_{\mu(1)}^+}f_{\nu(1)}\prod_{i=2}^d(g_i)_{\mu(i)\nu(i)}\rd\by-
 \int_{F_{\mu(1)}^-}f_{\nu(1)}\prod_{i=2}^d(g_i)_{\mu(i)\nu(i)}\rd\by-S_{\mu,\nu}
 \right)\\
 =&-\sum_{\mu,\nu\in\fS_d}(-1)^{\epsilon(\mu\nu)}S_{\mu,\nu}
 \end{split}
 \end{equation*}
 since $f$ and $(g_i)_{\mu(i)\nu(i)}$ are $\Lambda$-periodic, where
 \begin{equation*}
 S_{\mu,\nu}=\int_{[0,1]^d}f_{\nu(1)}\sum_{j=2}^d(g_j)_{\mu(1)\mu(j)\nu(j)}\prod_{\substack{i=2\\i\neq
 j}}^d(g_i)_{\mu(i)\nu(i)}\rd x_1\cdots\rd x_d.
 \end{equation*}
But then
 \begin{equation*}
 \begin{split}
 &\sum_{\mu,\nu\in\fS_d}(-1)^{\epsilon(\mu\nu)}S_{\mu,\nu}\\
 =&\sum_{j=2}^d\sum_{\nu\in\fS_d}(-1)^{\epsilon(\nu)}\int_{[0,1]^d}f_{\nu(1)}
 \left(\sum_{\mu\in\fS_d}(-1)^{\epsilon(\mu)}(g_j)_{\mu(1)\mu(j)\nu(j)}
 \prod_{\substack{i=2\\i\neq j}}^d(g_i)_{\mu(i)\nu(i)}\right)\rd x_1\cdots\rd
 x_d\\
 =&0
 \end{split}
 \end{equation*}
 since in the inner summation, the terms with $\mu$ and $\mu'$ such
 that $\mu'(1)=\mu(j)$, $\mu'(j)=\mu(1)$ and $\mu'(i)=\mu(i)$ for
 $i\neq1,j$ cancel each other. Hence the lemma follows.\\
\end{proof}

{\em Limit formula for the measure.} Before we state the formula, we
would like to introduce the notion of dual polytopes. The main
reference for this is \cite[\S2, \S3]{Mc}.  Let $\cC$ be a
(rational) polytopal decomposition of $\dR^d$ and $f$ be a
(rational) polytopal function, strongly polytopal convex with
respect to $\cC$. Then for any vertex $\bv\in\cC$, let $\rstar(\bv)$
be the set of all $d$-dimensional polytopes $\Delta\in\cC$
containing $\bv$ and for such $\Delta$, let $\bm_{\Delta}$ be the
gradient of $f$ on $\Delta$ (which is just the {\em peg} in {\em
loc. cit.}). Then the {\em dual polytopal} of $\bv$ with respect to
$f$ is defined to be the convex hull of points $\bm_{\Delta}$ for
all $\Delta\in\rstar(\bv)$, which we denote by $\widehat{\bv}^f$. It
is a $d$-dimensional (rational) polytope.

Now we are going to prove the following main theorem of this
section. Recall that we have the embedding
$i_A:\dR^d/\Lambda\hookrightarrow A^{\ran}$.

\begin{theo}\label{theo_measure}
Let $\overline{L_i}=(L_i,\|\;\|_i)$ ($i=1,...,d$) be $d$ integrable
metrized line bundles on $A$, where $\|\;\|_i$ are toric determined
by Green functions $g_i\in\sG_{\rint}(L_i)\cap\sC^3(\dR^d)$. Then we
have the following equality of measures on $A^{\ran}$:
 \begin{equation*}
 c_1(\overline{L_1})\wedge\cdots\wedge
 c_1(\overline{L_d})=(i_A)_*\rHess_{g_1,...,g_d}\rd\bx.
 \end{equation*}
\end{theo}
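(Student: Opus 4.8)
The plan is to reduce the statement to a computation with a single explicit model, namely the formal model $\cA$ attached to a sufficiently fine rational polytopal decomposition $\cC_\Lambda$ of $\dR^d/\Lambda$, and then to match the Chambert-Loir formula \eqref{measure} against the mixed Hessian on the skeleton. By symmetry, $\dZ$-multilinearity of both sides (the left by construction, the right by multilinearity of $\rHess$ in $g_1,\dots,g_d$), and the fact that $\sG_{\rint}(A)$ is spanned by $\sG_+(L)$ for ample $L$, it suffices to treat the case where all $L_i$ are a fixed ample $L$ and all $g_i$ lie in $\sG_+(L)\cap\sC^3(\dR^d)$. Moreover both sides are continuous in $g_i$ for the uniform topology — the right side obviously once we know (Lemma \ref{lem_hess}) that $\rHess$ has total mass $\deg_L(A)$ and depends continuously on the $C^2$-data, and the left side by Chambert-Loir's continuity of the measure under uniform limits of semi-positive metrics — so after a further approximation (using the Proposition preceding Lemma \ref{lem_hess} together with the explicit approximants $g^{(N)}$ built in its proof) it is enough to prove the identity when each $g_i = g$ is a \emph{strongly polytopal convex} formal Green function with respect to a common $\cC_\Lambda$.

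In that formal case, first I would compute the left-hand side. The model $\cA$ of Proposition \ref{prop_mumford} has reduced special fibre whose irreducible components $Y_\bv$ are toric varieties indexed by the vertices $\bv$ of $\cC_\Lambda$, and the point $\xi_{Y_\bv}$ is exactly $i_A(\bv)$ since $\bv = \tau_A(\xi_{Y_\bv})$. Each multiplicity $m_j$ is $1$ by reducedness. The line bundle $L^e$ extends to the formal line bundle $\cL$ determined by $e\cdot g$ (Proposition \ref{prop_metric}), and on the toric component $Y_\bv$ the restriction $\cL|_{Y_\bv}$ is the toric line bundle whose polytope is (a dilation by $e$ of) the dual polytope $\widehat{\bv}^{\,g}$ — this is the content of McMullen's correspondence between strongly convex polytopal functions and toric divisors, which is why the dual-polytope discussion is recorded just before the theorem. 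Hence $\big(c_1(\widetilde{\cL})^d \mid Y_\bv\big) = e^d \cdot d!\,\rvol(\widehat{\bv}^{\,g})$, because the top self-intersection of an ample toric divisor on a $d$-dimensional toric variety is $d!$ times the Euclidean volume of its polytope. Dividing by $e^d$ as in \eqref{measure} gives
\[
 c_1(\overline L)^d \;=\; \sum_{\bv\in\cC_\Lambda^{(0)}} d!\,\rvol\big(\widehat{\bv}^{\,g}\big)\,\delta_{i_A(\bv)}.
\]

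Next I would compute the right-hand side in the same formal case. Here $g$ is piecewise affine, so $\rHess_g\,\rd\bx$ is a sum of point masses supported at the vertices of $\cC_\Lambda$: away from the vertices $g$ is locally affine along enough directions that the distributional Hessian, and hence the mixed Hessian measure, has no mass. The mass concentrated at a vertex $\bv$ is precisely the Monge-Amp\`ere measure of the convex piecewise-linear function $g$ at $\bv$, which by the standard description of the real Monge-Amp\`ere measure of a convex PL function equals $d!$ times the volume of the gradient image of a neighbourhood of $\bv$, i.e. $d!\,\rvol(\widehat{\bv}^{\,g})$. Matching term by term against the display above proves the theorem in the formal case, and the reductions of the first paragraph propagate it to the general $C^3$ statement. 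The main obstacle is the passage between the smooth/$C^3$ regime in which $\rHess_{g_1,\dots,g_d}\rd\bx$ is literally a smooth measure and the piecewise-linear regime in which \eqref{measure} is a sum of Diracs: one must show that the smooth mixed-Hessian measures of the approximants $g_i^{(N)}$ — more honestly, of smoothings of the PL functions, since $g^{(N)}\notin\sC^3$ — converge weakly to the correct sum of Diracs, using the uniform estimates of Lemmas \ref{lem_sp3}--\ref{lem_sp5} to control where the mass accumulates, together with the mass equality $\int_{\dR^d/\Lambda}\rHess_{g_1,\dots,g_d}\rd\bx = \deg_L(A)$ from Lemma \ref{lem_hess} to prevent loss of total mass. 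Everything else — McMullen's toric dictionary, reducedness of $\widetilde{\cA}$, and continuity of Chambert-Loir's measure — is quoted from the references.
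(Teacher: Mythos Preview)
Your strategy and the paper's diverge at the key step. After the same polarization reduction to a single ample $L$ and a single $g\in\sG_+(L)\cap\sC^3(\dR^d)$ with positive-definite Hessian, the paper does \emph{not} prove the identity in the piecewise-linear case and then pass to a smooth limit. Instead it approximates the fixed smooth $g$ uniformly by strongly convex formal $g_n$, writes the Chambert-Loir measure $\mu_n$ of $g_n$ as the Dirac sum $d!\sum_{\bv}\rvol(\widehat{\bv}^{\,g_n})\,\delta_{p(\bv)}$ (exactly your toric computation), and then proves directly---by a block-by-block estimate on $\fF$ using Lemmas \ref{lem_sp4}--\ref{lem_sp5} to trap the pegs $\bm_\Delta$ in small dilated boxes---the one-sided inequality
\[
\mu \;\le\; (1+\delta)\,\mu_g\qquad\text{for every }\delta>0,
\]
where $\mu=\lim_n\mu_n$ and $\mu_g=\rHess_g\,\rd\bx$. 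Since $|\mu|=|\mu_g|=\deg_L(A)$ by \eqref{degree} and Lemma \ref{lem_hess}, the semi-positive difference $\mu_g-\mu$ has total mass zero, hence $\mu=\mu_g$.

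Your route---establish the identity for PL $g$ with both sides the same Dirac sum, then take a limit---runs into the issue you yourself flag as the ``main obstacle'': the right-hand side $\rHess_g\,\rd\bx$ is only defined in the theorem for $g\in\sC^3$, so for PL $g$ you are implicitly invoking the real Monge--Amp\`ere measure of a convex function and its weak continuity under locally uniform limits. (Your earlier sentence ``the right side obviously \dots depends continuously on the $C^2$-data'' is not the right justification: uniform convergence of the $g^{(N)}$ gives no $C^2$ control.) If you are willing to import that Alexandrov-type continuity result, your argument is clean and arguably more conceptual than the paper's. If not, your proposed handling of the obstacle---``uniform estimates of Lemmas \ref{lem_sp3}--\ref{lem_sp5} together with the mass equality to prevent loss of total mass''---is precisely the paper's Step 3, so the detour through the PL identity buys nothing. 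The device that makes the paper's direct estimate tractable is the one-sided-inequality-plus-mass trick: one only has to bound $\mu_n(f)$ from above by a Riemann-type upper sum for $\int f\,\rHess_g\,\rd\bx$, never from below.
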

\begin{proof}
The proof is divided into several steps.\\

{\em Step 1.} First we reduce to the case
$\overline{L_1}=\cdots=\overline{L_d}=\overline{L}=(L,\|\;\|)$ where
$L$ is an ample line bundle on $A$ and $\|\;\|$ is determined by a
Green function $g\in\sG_+(L)\cap\sC^3(\dR^d)$ such that the matrix
$(g_{ij})_{i,j=1,...,d}$ is positive-definite everywhere. Assuming
this, consider the subset $\sG'(A)\subset\sG_{\rint}(A)$ consisting
of such Green functions. Then $g,g'\in\sG'(A)$ implies
$ag+bg'\in\sG'(A)$ for $(a,b)\in\dZ_{\geq0}^2-\{(0,0)\}$. For any
continuous function $f$ on $A^{\ran}$, consider the functional
 \begin{equation*}
 \ell_f(g_1,...,g_d)=\left(c_1(\overline{L_1})\wedge\cdots\wedge
 c_1(\overline{L_d})-(i_A)_*\rHess_{g_1,...,g_d}\rd\bx\right)(f)
 \end{equation*}
which is symmetric and $\dZ$-multi-linear in $g_1,...,g_d$ and
$\ell_f(g,...,g)=0$ for $g\in\sG'(A)$ by our assumption. Then for
$g_1,...,g_d\in\sG'(A)$ and $t_1,...,t_d\in\dZ^d_{>0}$,
 \begin{equation*}
 \begin{split}
 0&=\ell_f\left(\sum_{i=1}^d t_ig_i,...,\sum_{i=1}^d t_ig_i\right)\\
 &=\sum_{\substack{k_1,...,k_d\geq0\\k_1+\cdots+k_d=d}}\frac{d!}{k_1!\cdots k_d!}\ell_f(...,g_i,...,g_i,...)
 t_1^{k_1}\cdots t_d^{k_d}
 \end{split}
 \end{equation*}
where $g_i$ appears $d_i$ times in the second $\ell_f$. Hence
$\ell_f(...,g_i,...,g_i,...)=0$. In particular,
$\ell_f(g_1,...,g_d)=0$ for all $g_i\in\sG'(A)$. But on the other
hand, $\sG'(A)$ generates the whose space
$\sG_{\rint}(A)\cap\sC^3(\dR^d)$ by definition and the existence and
smoothness of $g_{\rcan}$. Then $\ell_f(g_1,...,g_d)=0$ for all
$g_i\in\sG_{\rint}(A)\cap\sC^3(\dR^d)$. The theorem follows.\\

{\em Step 2.} Now fix $g\in\sG'(A)$ as above and assume
$g\in\sG_+(L)$ for an ample line bundle $L$. By Proposition
\ref{prop_metric}, there are $g_n\in\sG_{\rfor}(L)$ such that
$g_n\rightarrow g$ and the corresponding formal $k^{\circ}$-models
$(\cX_n,\cL_n)$ satisfying that $\widetilde{\cL_n}$ is ample on
$\widetilde{\cX_n}$. Hence the models are in fact algebrizable. Now
we view $\cX_n$ as schemes projective and flat over
$\rSpec\;k^{\circ}$ and $(\cL_n)_{\eta}\cong L^{e_n}$. We denote the
corresponding metrized line bundle determined by $g_n$ by
$\overline{L}_n=(L,\|\;\|_n)$ and by $g$ by
$\overline{L}=(L,\|\;\|)$. By \eqref{measure} and Proposition
\ref{prop_mumford}, the measure $c_1(\overline{L}_n)^{\wedge d}$ is
supported on $i_A(\dR^d/\Lambda)$ hence also for their limit. If we
let $\mu_n$ be the measure $c_1(\overline{L}_n)^{\wedge d}$
restricted on $\dR^d/\Lambda$ and $\mu=\lim_n\mu_n$. Then we only
need to prove that $\mu=\mu_g$ as elements in $\sM(\dR^d/\Lambda)$,
where $\mu_g$ is the measure
$\rHess_g\rd\bx\in\sM^3_{>0}(\dR^d/\Lambda)$.

We claim that for any $\delta>0$, we have
 \begin{equation}\label{claim}
 \mu\leq(1+\delta)\mu_g.
 \end{equation}
Assuming this, then $\mu\leq\mu_g$. But by \eqref{degree} and Lemma
\ref{lem_hess}, $|\mu|=|\mu_g|$; i.e., $|\mu_g-\mu|=0$. Hence
$\mu=\mu_g$ which confirms the theorem.\\

{\em Step 3.} The last step is dedicated to prove the above claim
\eqref{claim}. We prove that for any $\delta>0$ and
$f\in\sC_{\geq0}(\dR^d/\Lambda)$, $\mu(f)\leq(1+\delta)\mu_g(f)$.
Given $\epsilon_1>0$, take $\overline{L}_n$ as above such that
$|g_n-g|<\epsilon_1$, then the model $\cX_n$ determines a rational
polytopal decomposition $(\cC_n)_{\Lambda}$ of $\dR^d/\Lambda$ which
comes from a $\Lambda$-periodic rational polytopal decomposition
$\cC_n$ of $\dR^d$. And the function $g_n$ is rational polytopal and
strictly polytopal convex with respect to $\cC_n$. By Proposition
\ref{prop_mumford}, the set of irreducible components of
$\widetilde{\cX_n}$ is identified with the set of
$\Lambda$-translation classes of vertices in $\cC_n$. Hence if we
denote $Y_{\bv}$ the irreducible component corresponding to $\bv$,
then $Y_{\bv}=Y_{\bv'}$ if and only if $\bv=\bv'+\blambda$ for some
$\blambda\in\Lambda$. By \eqref{measure}, we have
 \begin{equation}\label{mu_n}
 \mu_n=\frac{1}{e_n^d}\sum_{\bv\in\fF}\deg_{\overline{\cL_n}}(Y_{\bv})\delta_{p(\bv)}
 \end{equation}
where we recall that $\fF\subset\dR^d$ is the fixed fundamental
domain of $\Lambda$ and $p:\dR^d\rightarrow\dR^d/\Lambda$ is the
projection. We recall a formula in \cite[p.366 (36)]{Gu2} which is
deduced from \cite[p.112 Corollary]{Fu}, that
 \begin{equation*}
 \deg_{\overline{\cL_n}}(Y_{\bv})=d!\cdot\rvol(\widehat{\bv}^{e_ng_n}).
 \end{equation*}
Hence we have
 \begin{equation}
 \eqref{mu_n}=d!\sum_{\bv\in\fF}\rvol(\widehat{\bv}^{g_n})\delta_{p(\bv)}.
 \end{equation}
For any integer $N>0$, we divide $\fF$ into $N^d$ blocks as follows.
For $(b_1,...,b_d)\in\{0,1,...,N-1\}^d$, let
 \begin{equation*}
 \fF^{(N)}_{b_1,...,b_d}=\left\{\bx=x_1\blambda_1+\cdots+x_d\blambda_d\;|\;
 \frac{b_i}{N}\leq x_i<\frac{b_i+1}{N}\right\}.
 \end{equation*}
Then $\fF=\bigsqcup\fF^{(N)}_{b_1,...,b_d}$ and
$\overline{\fF^{(N)}_{b_1,...,b_d}}$ is a $d$-dimensional rational
polytope. For any $\epsilon_2>0$, there exists $N(\epsilon_2)>0$
such that for any $N\geq N(\epsilon_2)$,
 \begin{equation}\label{epsilon_2}
 \max_{\bx\in\overline{\fF^{(N)}_{b_1,...,b_d}}}g_{ij}(\bx)-
 \min_{\bx\in\overline{\fF^{(N)}_{b_1,...,b_d}}}g_{ij}(\bx)<\epsilon_2
 \end{equation}
for all $i,j=1,...,d$ and $(b_1,...,b_d)$. We now assume $N$ is that
large and consider on a block, say without lost of generality,
$\fF^{(N)}=\fF^{(N)}_{0,...,0}$. Then
 \begin{equation*}
 \mu_n|_{\fF^{(N)}}(f)=d!\sum_{\bv\in\fF^{(N)}}\rvol(\widehat{\bv}^{g_n})f(\bv)\leq
 d!\cdot\rvol\left(\Delta^{(N)}\right)\cdot\sup_{\bx\in\fF^{(N)}}f(\bx)
 \end{equation*}
where $\Delta^{(N)}$ is the convex hull of $\bm_{\Delta}$ (pegs
induced by $g_n$) for (finitely many) $d$-dimensional polytopes
$\Delta\in\cC_n$ such that $\Delta\cap\fF^{(N)}\neq\emptyset$. Now
we are going to give an upper bound for this volume. Let $\bx_0$ be
the point $\frac{1}{2N}(\blambda_1+\cdots+\blambda_d)$ which is the
center of symmetry of $\overline{\fF^{(N)}}$. The volume
$\rvol\left(\Delta^{(N)}\right)$ will keep unchanged under following
operations:
\begin{description}
  \item[a)] We replace $g_n$ by $\tilde{g}_n$ where
 \begin{equation*}
 \tilde{g}_n(\bx)=g_n(\bx)-\nabla g(\bx_0)\cdot(\bx-\bx_0)-g(\bx_0);
 \end{equation*}
(we also let $\tilde{g}(\bx)=g(\bx)-\nabla
g(\bx_0)\cdot(\bx-\bx_0)-g(\bx_0)$.)
  \item[b)] We make a translation $\bx'=\bx-\bx_0$;
  \item[c)] We apply a rotation $\bx''=R.\bx'$ for some $R\in\bSO_d$.
\end{description}

Hence we may assume that

\begin{description}
  \item[a')] $\nabla g(\bzero)=\bzero$ and $|g_n-g|<\epsilon_1$;
  \item[b')] $g(\bzero)=0$;
  \item[c')] \begin{equation*}
 (g_{ij}(\bzero))=\left(
                    \begin{array}{cccc}
                      h_{11} &  &  &  \\
                       & h_{22} &  &  \\
                       &  & \ddots &  \\
                       &  &  & h_{dd} \\
                    \end{array}
                  \right)
 \end{equation*}
with $h_g<h_{11}\leq\cdots\leq h_{dd}<H_g$;
  \item[d')] \begin{equation*}
 \fF^{(N)}=\left\{\bx=x_1\blambda'_1+\cdots+x_d\blambda'_d\;|\;-\frac{1}{2N}\leq
 x_i<\frac{1}{2N}\right\}
 \end{equation*}
where $\blambda'_i=R.\blambda_i$ for certain $R\in\bSO_d$.
\end{description}

For any $\epsilon_3\geq0$, we also introduce the following
 \begin{equation*}
 \fF^{(N)}_{g,\epsilon_3}=\left\{\bx'=(1+\epsilon_3)h_{11}x_1\be_1+\cdots+(1+\epsilon_3)h_{dd}x_d\be_d
 \;|\;\bx=x_1\be_1+\cdots+x_d\be_d\in\fF^{(N)}\right\}.
 \end{equation*}
The following lemma is obvious.

\begin{lem}\label{lem_ball}
For any $\bx'\in\fF^{(N)}_{g,0}$, the ball
$B\left(\bx',\frac{\epsilon_3h_gr_{\fF}}{N}\right)$ is contained
in $\fF^{(N)}_{g,\epsilon_3}$.\\
\end{lem}

Now for any $\bx\in\fF^{(N)}$, we have
 \begin{equation*}
 g_i(\bx)=\int_0^1\nabla_{\bx}\nabla_{\be_i}g(t\bx)\rd t.
 \end{equation*}
By \eqref{epsilon_2}, we have
 \begin{equation*}
 |g_i(\bx)-h_{ii}x_i|\leq\epsilon_2(|x_1|+\cdots+|x_d|)\leq\frac{\epsilon_2\cdot
 dR_{\fF}}{2N}.
 \end{equation*}
The point $\bx'=h_{11}x_1\be_1+\cdots+h_{dd}x_d\be_d$ is in
$\fF^{(N)}_{g,0}$. Let $\bm_{\Delta}(\bx)$ be (any) peg of $\Delta$
containing $\bx$, then we have, by Lemma \ref{lem_sp4},
 \begin{equation*}
 \|\bm_{\Delta}(\bx)-\bx'\|\leq\|\bm_{\Delta}(\bx)-\nabla
 g(\bx)\|+\|\nabla g(\bx)-\bx'\|\leq 2\sqrt{\epsilon_1\cdot
 H_g}+\frac{\epsilon_2\cdot d^{\frac{3}{2}}R_{\fF}}{2N}.
 \end{equation*}
Hence by Lemma \ref{lem_ball}, if
 \begin{equation*}
 2\epsilon_3h_gr_{\fF}\geq\epsilon_2\cdot
 d^{\frac{3}{2}}R_{\fF}+4N\sqrt{\epsilon_1\cdot H_g},
 \end{equation*}
then $\bm_{\Delta}(\bx)\in\fF^{(N)}_{g,\epsilon_3}$. Since the later
is convex, we have
$\rvol\left(\Delta^{(N)}\right)\leq\rvol\left(\fF^{(N)}_{g,\epsilon_3}\right)=(1+\epsilon_3)^dh_{11}\cdots
h_{dd}\cdot\rvol\left(\fF^{(N)}\right)$. Now we let
$\epsilon_3=\sqrt[d]{1+\delta}-1$,
$\epsilon_2=\epsilon_3h_gr_{\fF}d^{-\frac{3}{2}}R^{-1}_{\Lambda}$.
Then for a fixed $N\geq N(\epsilon_2)$, when $n$ is large enough and
hence
 \begin{equation*}
 \epsilon_1\leq\frac{1}{H_g}\left(\frac{\epsilon_3h_gr_{\fF}}{4N}\right)^2,
 \end{equation*}
we have
 \begin{equation*}
 \mu_n|_{\fF^{(N)}}(f)\leq(1+\delta)d!\cdot h_{11}\cdots
 h_{dd}\cdot\sup_{\bx\in\fF^{(N)}}f(\bx)\leq(1+\delta)\sup_{\bx\in\fF^{(N)}}f\cdot\rHess_g(\bx)
 \cdot\rvol\left(\fF^{(N)}\right).
 \end{equation*}
Summing over all $(b_1,...,b_d)$, we have
 \begin{equation*}
 \mu_n(f)\leq(1+\delta)\sum_{(b_1,...,b_d)}\sup_{\bx\in\fF^{(N)}_{b_1,...,b_d}}f\cdot\rHess_g(\bx)
 \cdot\rvol\left(\fF^{(N)}_{b_1,...,b_d}\right).
 \end{equation*}
Let $n\rightarrow\infty$ and then $N\rightarrow\infty$, we have
 \begin{equation*}
 \mu(f)\leq(1+\delta)\int_{\fF}f(\bx)\rHess_g(\bx)\rd\bx
 \end{equation*}
which confirms the claim \eqref{claim}.
\end{proof}

\section{A Calabi-Yau theorem}
\label{sec_cy}

In this section, we state and prove the non-archimedean analogue of
the Calabi-Yau theorem for totally degenerate abelian variety $A$.\\

{\em Classical Calabi-Yau theorem review.} Let us have a quick
review of the famous Calabi conjecture which is proved by Yau in
complex geometry. For details, we refer to Yau's original paper
\cite{Ya} and also the book \cite[Chapter 5]{Jo} by Joyce. For
simplicity, we just state it for the algebraic case. Hence let $M$
be a connected compact complex manifold of dimension $d\geq1$ and
$L$ an ample line bundle on it. Given any smooth metric $\|\;\|$ on
$L$, we have the Chern class $\omega=c_1(L,\|\;\|)$ which is a
(smooth) $(1,1)$-form on $M$. It determines a measure, i.e., a top
form $\mu=\omega^{\wedge d}$ on $M$. We say $\|\;\|$ is positive if
$\omega$ is positive definite everywhere. Then the measure $\mu$ is
obviously positive. The Calabi conjecture asserts that given any
smooth positive $(d,d)$-form $\mu'$ such that
$\int_M\mu'=\int_M\mu$, there exists a smooth positive measure
$\|\;\|'$ on $L$, unique up to a scalar, such that
$\mu'=(\omega')^{\wedge d}$ where $\omega'=c_1(L,\|\;\|')$.

If we write $\mu'=\re^f\mu$ for a unique smooth real function $f$ on
$M$, then the Calabi conjecture asserts that there exists a unique
smooth real function $\phi$ such that
\begin{description}
  \item[(1)] $\omega+\rd\rd^c\phi$ is a positive $(1,1)$-form;
  \item[(2)] $\int_M\phi\mu=0$;
  \item[(3)] $(\omega+\rd\rd^c\phi)^{\wedge d}=\re^f\mu$.
\end{description}

If we choose a local coordinates $z_1,...,z_d$ on an open set $U$ in
$M$, then
$\left(g_{\alpha\bar{\beta}}\right)_{\alpha,\bar{\beta}=1,...,d}$ is
an $d\times d$ hermitian matrix, where $g$ is the Riemannian metric
associate with $\omega$. Then the condition (3) reads
\begin{description}
 \item[(3')]
 \begin{equation}\label{cma}
 \det\left(g_{\alpha\bar{\beta}}+\frac{\partial^2\phi}{\partial z_{\alpha}
 \partial\bar{z}_{\bar{\beta}}}\right)=\re^f\det\left(g_{\alpha\bar{\beta}}\right)
 \end{equation}
 which is a complex Monge-Amp\`{e}re equation.
\end{description}
More generally, $\omega$ could just be a K\"{a}hler form. Then the
existence part of the following theorem is due to Yau and the
uniqueness part is due to Calabi.

 \begin{theo}\label{theo_cy}
 Let $M$, $\omega$ be as above, then\\
 (Existence, cf. \cite[\S4, Theorem 1]{Ya}) For any $f\in\sC^k(M)$ ($k\geq3$), there
 exists $\phi\in\sC^{k+1,\alpha}(M)$ for any $\alpha\in[0,1)$ satisfying (1)-(3);\\
 (Uniqueness, cf. \cite{Ca}, \cite[\S5, Theorem 3]{Ya}) For any $f\in\sC^1(M)$, there is at most one
 $\phi\in\sC^3(M)$ satisfying (1)-(3).\\
 \end{theo}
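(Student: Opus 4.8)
The plan is to prove the two assertions separately: uniqueness by Calabi's maximum-principle argument, existence by Yau's continuity method. For uniqueness, suppose $\phi_1,\phi_2\in\sC^3(M)$ both satisfy (1)--(3); set $\psi=\phi_1-\phi_2$ and $\omega_i=\omega+\rd\rd^c\phi_i$, which are K\"ahler by (1). Subtracting the two equations in (3) and using the telescoping identity $\omega_1^{\wedge d}-\omega_2^{\wedge d}=\rd\rd^c\psi\wedge T$ with $T=\sum_{j=0}^{d-1}\omega_1^{\wedge j}\wedge\omega_2^{\wedge(d-1-j)}$, one gets
\[
0=\rd\rd^c\psi\wedge T .
\]
Because each $\omega_i$ is K\"ahler, $T$ is a closed, strictly positive $(d-1,d-1)$-form; multiplying the displayed identity by $\psi$, integrating over $M$ and applying Stokes gives $\int_M\rd\psi\wedge\rd^c\psi\wedge T=0$, whence $\rd\psi=0$, so $\psi$ is constant, and normalization (2) forces $\psi=0$. (Equivalently, $\psi$ solves a homogeneous linear second-order elliptic equation with positive-definite leading coefficients, so is constant by the maximum principle.)

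For existence I would run the continuity method. Note first that (3) is solvable only if $\int_M\re^f\mu=\int_M\mu$, since $(\omega+\rd\rd^c\phi)^{\wedge d}$ is cohomologous to $\mu=\omega^{\wedge d}$; assume this. For $t\in[0,1]$ consider the family
\[
(\omega+\rd\rd^c\phi)^{\wedge d}=\re^{tf+c_t}\,\omega^{\wedge d},\qquad \re^{c_t}\int_M\re^{tf}\omega^{\wedge d}=\int_M\omega^{\wedge d},\qquad \int_M\phi\,\omega^{\wedge d}=0,
\]
and let $S\subseteq[0,1]$ be the set of $t$ admitting a solution $\phi_t\in\sC^{k+1,\alpha}(M)$ with $\omega+\rd\rd^c\phi_t>0$. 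Then $0\in S$ with $\phi_0=0$, and at $t=1$ one has $c_1=0$, so a solution there is precisely the desired $\phi$. Openness of $S$ is the inverse function theorem in H\"older spaces: at a solution $\phi_t$ the linearization of $\phi\mapsto\log\bigl((\omega+\rd\rd^c\phi)^{\wedge d}/\omega^{\wedge d}\bigr)$ is the Laplace-Beltrami operator $\Delta_{\omega_t}$ of $\omega_t=\omega+\rd\rd^c\phi_t$, an isomorphism from mean-zero $\sC^{k+1,\alpha}$ functions onto the $\omega_t$-mean-zero subspace of $\sC^{k-1,\alpha}$, which is exactly the tangent space to the constraint $\int_M\re^h\omega^{\wedge d}=\mathrm{const}$ at $h=tf+c_t$; the constant $c_t$ is chosen to remain on that constraint.

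The real work, which is Yau's contribution, is that $S$ is closed, and this is the main obstacle. It reduces to $t$-independent a priori bounds $\|\phi_t\|_{\sC^{k+1,\alpha}}\le C$, established in a fixed order: (i) a $\sC^0$ bound, by Moser iteration on the equation --- the most delicate estimate; (ii) a second-order bound, from the maximum principle applied to $\log(\mathrm{tr}_\omega\omega_t)-A\phi_t$ for $A$ large, using a lower bound on the holomorphic bisectional curvature of $\omega$, which together with the prescribed determinant $\re^{tf+c_t}$ makes $\omega_t$ uniformly equivalent to $\omega$ and hence the equation uniformly elliptic; (iii) a $\sC^{2,\alpha}$ bound, from Calabi's third-order estimate (or, in modern language, the complex Evans-Krylov theorem) together with Schauder estimates; (iv) bootstrapping: with $\phi_t\in\sC^{2,\alpha}$ uniformly and $f\in\sC^k$, the linearized equation has H\"older coefficients and iterated Schauder estimates yield uniform $\sC^{k+1,\alpha}$ bounds. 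Arzel\`a-Ascoli then lets one pass, from $t_i\to t_\infty$ in $S$, to a subsequential limit $\phi_{t_i}\to\phi_{t_\infty}$ in $\sC^{k+1,\alpha'}$ ($\alpha'<\alpha$) solving the equation at $t_\infty$, with $\omega+\rd\rd^c\phi_{t_\infty}>0$ by the uniform equivalence of (ii); hence $S=[0,1]$, and $t=1$ gives $\phi\in\sC^{k+1,\alpha}(M)$ satisfying (1)--(3). The difficulty is entirely in the nonlinear chain (i)--(iii), which is what we quote from Yau; the rest is soft.
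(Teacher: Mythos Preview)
Your sketch is correct and follows the classical route (Calabi's integration-by-parts uniqueness, Yau's continuity method with the $\sC^0$/second-order/third-order a priori estimate chain). However, the paper does not prove this theorem at all: it is stated as a quoted result, with the existence attributed to \cite[\S4, Theorem~1]{Ya} and the uniqueness to \cite{Ca} and \cite[\S5, Theorem~3]{Ya}, and is then used as a black box in the proof of Theorem~\ref{theo_ncy}. So there is nothing to compare against --- your outline is essentially what one finds in those references (or in the exposition \cite{Jo}), and in the context of this paper it would be unnecessary to reproduce it.
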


{\em A non-archimedean analogue.} Recall that we have a totally
degenerate abelian variety $A$ of dimension $d$ over $k$ and an
ample line bundle $L$ on it. For any integrable metrized line bundle
$\overline{L}=(L,\|\;\|)$, we define the measure
$c_1(\overline{L})^{\wedge d}$ on the analytic space $A^{\ran}$.
Also, we have a skeleton $i_A:\dR^d/\Lambda\hookrightarrow
A^{\ran}$. The following is a Calabi-Yau theorem in the current
setting for positive measures supported on this skeleton, which has
certain smoothness in the real-analytic sense.

\begin{theo}[\textbf{Non-archimedean Calabi-Yau}]\label{theo_ncy}
Let $A$, $L$, $i_A$, be as above. For any
$\mu\in\sM^k_{>0}(\dR^d/\Lambda)$ ($k\geq3$), there is a
semi-positive metric $\|\;\|$ on $L$, unique up to scalar, such that
$c_1(\overline{L})^{\wedge d}=(i_A)_*\mu$ where
$\overline{L}=(L,\|\;\|)$. Moreover, $\overline{L}$ is toric in the
sense of Definition \ref{defn_toric} whose corresponding Green
function $g$ is in $\sG_+(L)\cap\sC^{k+1,\alpha}(\dR^d)$ for any
$\alpha\in[0,1)$.
\end{theo}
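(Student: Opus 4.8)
The plan is to reduce the statement to the real Monge--Amp\`ere equation on $\dR^d/\Lambda$ advertised in the introduction, to solve that equation by descent from Yau's solution of the complex Monge--Amp\`ere equation, and then to extract the measure from the limit formula of Theorem \ref{theo_measure}. Write $\mu=F\,\rd\bx$ with $F\in\sC^k_{>0}(\dR^d/\Lambda)$. Recall the canonical Green function $g_{\rcan}=q+c\in\sG(L)\cap\sC^{\infty}(\dR^d)$ from Section \ref{sec_metrized}, whose Hessian is the constant positive definite matrix $(g^0_{ij}):=(q_{ij})$ with $d!\det(g^0_{ij})=H_q$. I look for $g$ of the form $g=g_{\rcan}+\phi$ with $\phi$ a $\Lambda$-periodic function with $\int_{\dR^d/\Lambda}\phi\,\rd\bx=0$; then $g\in\sG(L)$ automatically because $\phi$ is $\Lambda$-periodic and $g_{\rcan}$ satisfies \eqref{green}, and the condition $\rHess_g=F$ becomes
\[
\det\left(g^0_{ij}+\phi_{ij}\right)=\det\left(g^0_{ij}\right)\cdot\re^h,\qquad \re^h:=F/H_q,
\]
together with positive definiteness of $\left(g^0_{ij}+\phi_{ij}\right)$. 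Here $h\in\sC^k(\dR^d/\Lambda)$, and by the hypothesis $\int\mu=\deg_L(A)$ together with Lemma \ref{lem_deg} one has $\int_{\dR^d/\Lambda}\re^h\,\rd\bx=\rvol(\Lambda)$.

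To solve this equation I would pass to the complex torus $X=\dC^d/(\Lambda\oplus i\Lambda)$ with coordinates $z_j=x_j+iy_j$, equipped with the translation-invariant form $\omega=\tfrac{i}{2}\sum_{j,k}g^0_{jk}\,\rd z_j\wedge\rd\bar z_k$, which is a K\"ahler form since $(g^0_{jk})$ is real, symmetric and positive definite. Lifting $h$ to the $\by$-independent function $\tilde h\in\sC^k(X)$, the normalization $\int_{\dR^d/\Lambda}\re^h\,\rd\bx=\rvol(\Lambda)$ is exactly the compatibility condition $\int_X\re^{\tilde h}\omega^{\wedge d}=\int_X\omega^{\wedge d}$, so Theorem \ref{theo_cy} supplies $\psi\in\sC^{k+1,\alpha}(X)$, unique subject to $\int_X\psi\,\omega^{\wedge d}=0$, with $\omega+\rd\rd^c\psi$ positive and $(\omega+\rd\rd^c\psi)^{\wedge d}=\re^{\tilde h}\omega^{\wedge d}$. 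The real $d$-torus $i\dR^d/i\Lambda$ acts on $X$ by holomorphic isometries of $\omega$ fixing $\tilde h$; hence for any such translation $T$ the function $\psi\circ T$ solves the same equation with the same normalization, so $\psi\circ T=\psi$ by the Calabi uniqueness, and therefore $\psi$ depends only on $\bx$ and descends to $\dR^d/\Lambda$. Since $\psi$ is $\by$-independent, $\rd\rd^c\psi$ along the real directions has matrix a fixed constant multiple of $(\psi_{ij})$; rescaling $\psi$ by that constant produces $\phi\in\sC^{k+1,\alpha}(\dR^d/\Lambda)$ with $\int\phi\,\rd\bx=0$, $\left(g^0_{ij}+\phi_{ij}\right)$ positive definite, and $\det\left(g^0_{ij}+\phi_{ij}\right)=\det\left(g^0_{ij}\right)\re^h$, i.e.\ a solution of the displayed PDE.

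Now set $g=g_{\rcan}+\phi\in\sG(L)\cap\sC^{k+1,\alpha}(\dR^d)$. Its Hessian is positive definite everywhere, so by the Proposition of Section \ref{sec_metrized} asserting that any $g\in\sG(L)\cap\sC^2(\dR^d)$ with semi-positive definite $(g_{ij})$ lies in $\sG_+(L)$, we get $g\in\sG_+(L)$; let $\|\;\|$ be the associated semi-positive toric metric and $\overline L=(L,\|\;\|)$. As $g\in\sG_+(L)\subset\sG_{\rint}(L)$ and $g\in\sC^3(\dR^d)$ (since $k\geq3$), Theorem \ref{theo_measure} applies and gives $c_1(\overline L)^{\wedge d}=(i_A)_*\rHess_g\,\rd\bx=(i_A)_*(F\,\rd\bx)=(i_A)_*\mu$, which proves existence; the regularity assertion $g\in\sG_+(L)\cap\sC^{k+1,\alpha}(\dR^d)$ is built into the construction. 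For uniqueness up to a scalar I would cite the general uniqueness theorem of Yuan and Zhang \cite{YZ} for semi-positive metrics with prescribed top Chern measure, which in particular applies to $\overline L$; among toric semi-positive metrics uniqueness already follows from the Calabi uniqueness invoked above.

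The genuine analytic difficulty is entirely absorbed into Yau's theorem, so the steps that actually require care are the descent argument --- verifying that the complex Monge--Amp\`ere solution is invariant under the imaginary torus and hence solves the real equation --- and the bookkeeping relating $\rd\rd^c$ to the real Hessian and matching $\int\mu$ with $\int_X\omega^{\wedge d}$. One should also confirm that $g_{\rcan}+\phi$ lies in $\sG(L)$ (it does, since $\phi$ is $\Lambda$-periodic and $g_{\rcan}$ satisfies \eqref{green}) so that Proposition \ref{prop_metric}, the above Proposition on semi-positive Green functions, and Theorem \ref{theo_measure} all apply verbatim.
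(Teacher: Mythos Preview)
Your proposal is correct and follows essentially the same route as the paper: reduce to the real Monge--Amp\`ere equation via Theorem \ref{theo_measure} and the Proposition on $\sC^2$ Green functions with semi-positive Hessian, then solve that equation by passing to the complex torus $\dC^d/(\Lambda\oplus i\Lambda)$, applying Yau's theorem to the $\by$-independent data, and descending via Calabi uniqueness under the imaginary torus translations; uniqueness is likewise deferred to \cite{YZ}. The only cosmetic differences are your explicit mention of the normalization constant relating $\rd\rd^c$ to the real Hessian and your explicit invocation of the $\sG_+(L)$ proposition, both of which the paper leaves implicit.
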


\begin{proof}
The uniqueness part follows from the general theorem on the
uniqueness \cite[Theorem 1.1.1]{YZ} proved by Yuan and Zhang.

Now we prove the existence. Recall that we have a canonical Green
function $g_{\rcan}$ for $L$ which determines a measure
$\mu_{\rcan}$ on $\dR^d/\Lambda$ (which is just $H_q$ times the
Lebesgue measure). By Theorem \ref{theo_measure}, we only need to
prove that for a given $f\in\sC^k(\dR^d/\Lambda)$ ($k\geq3$) such
that $\int_{\dR^d/\Lambda}\re^f\rd\bx=1$, there exists a function
$\phi\in\sC^{k+1,\alpha}(\dR^d/\Lambda)$ for any $\alpha\in[0,1)$
such that:
\begin{itemize}
  \item The matrix
$\left((g_{\rcan})_{ij}+\phi_{ij}\right)_{i,j=1,...,d}$ is positive
definite;
  \item It satisfies the real Monge-Amp\`{e}re equation
  \begin{equation}\label{rma}
  \det\left((g_{\rcan})_{ij}+\frac{\partial^2\phi}{\partial x_i\partial x_j}\right)=\frac{H_q}{d!}\re^f;
  \end{equation}
  \item If $f\in\sC^{\infty}(\dR^d/\Lambda)$, then
$\phi\in\sC^{\infty}(\dR^d/\Lambda)$.
\end{itemize}

We would like to deduce it from the complex case, i.e., theorem
\ref{theo_cy}. We introduce the following manifold
 \begin{equation*}
 \dA=\dR^d/\Lambda\oplus\dR^d/\Lambda
 \end{equation*}
where we write $(x_1,...,x_d;y_1,...,y_d)$ for the usual chart. The
tangent bundle has a canonical splitting
$\cT_{\dA}=\cT_1\oplus\cT_2$ where $\cT_i$ is the pull-back of the
tangent bundle on the $i$-th $\dR^d/\Lambda$. Write $u_i=\partial
x_i$ and $v_i=\partial y_i$ and define a complex structure $J$ on
$\cT_{\dA}$ by $Ju_i=v_i$, $Jv_i=-u_i$ ($i=1,...,d$). Then as a
complex manifold, $\dA$ is isomorphic to
$\dC^d/\Lambda\oplus\Lambda$.

We define
 \begin{equation*}
 \dg\left((u_i,v_j),(u_{i'},v_{j'})\right)=\frac{1}{2}
 \left((g_{\rcan})_{ii'}+(g_{\rcan})_{jj'}\right).
 \end{equation*}
Then $\dg$ is a Riemannian metric on $\dA$ and
$\domega(w,w')=\dg(Jw,w')$ is a K\"{a}hler metric on $(\dA,J)$ with
$\dmu=\domega^{\wedge d}$. Define $\df(\bx,\by)=f(\bx)$ which is in
$\sC^k(\dA)$. Then $\left((\dA,J),\domega,\df\right)$ is in the
situation of Theorem \ref{theo_cy}. Applying this theorem, we see
that there is a unique function $\dphi\in\sC^{k+1,\alpha}(\dA)$ for
any $\alpha\in[0,1)$ satisfying (1)-(3). If we write the
Monge-Amp\`{e}re equation \eqref{cma} explicitly in the current
situation, we see that $\dphi$ satisfies
 \begin{equation}\label{cma1}
 \det\left((g_{\rcan})_{\alpha\bar{\beta}}+\frac{\partial^2\dphi(\bx,\by)}
 {\partial\bz_{\alpha}\partial\bar{\bz}_{\bar{\beta}}}\right)=\frac{H_q}{d!}\re^{f(\bx)}
 \end{equation}
where $\bz_{\alpha}=\bx_{\alpha}+i\by_{\alpha}$ and
$\bar{\bz}_{\bar{\beta}}=\bx_{\bar{\beta}}-i\by_{\bar{\beta}}$. For
any $\by_0\in\dR^d/\Lambda$, let
$\dphi_{\by_0}(\bx,\by)=\dphi(\bx,\by-\by_0)$. Then $\dphi_{\by_0}$
is also a $\sC^{5,\alpha}$-solution satisfying (1)-(3). Hence by the
uniqueness, $\dphi_{\by_0}=\dphi$ for any $\by_0$, i.e.,
 \begin{equation*}
 \frac{\partial\dphi}{\partial\by_i}\equiv0;\qquad i=1,...,d.
 \end{equation*}
Restricting to $\dR^d/\Lambda\times\{\bzero\}$, we see that
$\phi(\bx):=\dphi(\bx,\bzero)\in\sC^{k+1,\alpha}(\dR^d/\Lambda)$ for
any $\alpha\in[0,1)$, satisfies the real Monge-Amp\`{e}re equation
\eqref{rma} and such that $\left((g_{\rcan})_{ij}+\phi_{ij}\right)$
is positive definite. Hence the theorem is proved.
\end{proof}

\begin{rem}
The restriction on the field $k$ is not necessary. In fact, all
results and argument remain valid for any algebraically closed
non-archimedean field whose valuation is non-trivial. One only need
to use the generalized definition of Chambert-Loir's measure given
by Gubler in \cite[\S3]{Gu2}.
\end{rem}

{\small

}

\end{document}